\newtheorem{definition}{Definition}
\newtheorem{lemma}{Lemma}
\newtheorem{remark}{Remark}
\newtheorem{theorem}{Theorem}
\numberwithin{equation}{section}
\theoremstyle{definition}
\numberwithin{equation}{section}
\title[Weakly Einstein critical metrics]{Weakly Einstein critical metrics of the volume functional on compact manifolds with boundary}
\author{H. Baltazar}
\author{A. Da Silva}
\author{F. Oliveira$^1$}
\address[H. Baltazar]{Departamento de Matem\'{a}tica, Universidade Federal do Piau\'{\i}\\
64049-550 Te\-re\-si\-na, Piau\'{\i}, Brazil.}
\email{halyson@ufpi.edu.br}
\address[A. da Silva]{Faculdade de Matem\'{a}tica, Universidade Federal do Par\'{a}\\
	66075-110 Be\-l\'{e}m, Par\'{a}, Brazil.}
\email{adamsilva@ufpa.br}
\address[F. Oliveira]{Departamento de Ci\^{e}ncias Naturais, Matem\'{a}tica e Estat\'{i}stica(DCME), Universidade Federal Rural do Semi-\'Arido\\
59625-900 - Mossor\'o, Rio Grande do Norte, Brazil.}
\email{fabricio@ufersa.edu.br}
\thanks{$^{1}$ Partially supported by CNPq-Brazil}
\subjclass[2010]{Primary 53C25; Secondary  53C20, 53C21, 53C24}
\keywords{Volume functional; critical metrics; weakly Einstein}
\begin{document}

\newcommand{\spacing}[1]{\renewcommand{\baselinestretch}{#1}\large\normalsize}
\spacing{1.2}

\begin{abstract}
The goal of this paper is to study weakly Einstein critical metrics of the volume functional on a compact manifold $M$ with smooth boundary $\partial M$. Here, we will give the complete classification for an $n$-dimensional, $n=3$ or $4,$ weakly Einstein critical metric of the volume functional with nonnegative scalar curvature. Moreover, in the higher dimensional case ($n\geq5$), we will established a similar result for weakly Einstein critical metric under a suitable constraint on the Weyl tensor.
\end{abstract}

\maketitle

\section{Introduction}
\label{intro}

Let $M^{n}$ ($n\geq3$) be a connected, compact n-dimensional manifold with smooth boundary $\partial M$ with a fixed metric $\gamma.$ In 2009, Miao and Tam \cite{miaotamTAMS} proved that if the first Dirichlet eigenvalue of $(n-1)\Delta_{g}+R$ on $M$ is positive, then $g$ is a critical point of the volume functional restricted to the subset of Riemannian metrics with constant scalar curvature $R$ and prescribed boundary metric $\gamma$ if and only if there is a smooth function $f$ on $M$ such that  $f|_{\partial M}=0$, and satisfying the following system of PDE's
\begin{equation}\label{eqMiaoTam1}
-(\Delta_{g} f)g+Hess_{g} f-fRic_{g}=g,
\end{equation}
where $Ric_{g}$ and $Hess_{g} f$ stand, respectively, for the Ricci tensor and the Hessian of $f$ associated to $g$ on $M^n$ (see \cite{miaotam} for more details). Hence, following the terminology used in \cite{baltazar,BDR,BDRR,miaotamTAMS}  we recall the definition of Miao-Tam critical metric.

\begin{definition}\label{eq:miaotam}
A Miao-Tam critical metric is a $3$-tuple $(M^n,\,g,\,f),$ where $(M^{n},\,g)$ is a compact Riemannian manifold of dimension at least three, with a smooth boundary $\partial M$ and $f: M^{n}\to \Bbb{R}$ is a nonnegative smooth function satisfying equation $(\ref{eqMiaoTam1})$ such that $f|_{\partial M}=0$.
\end{definition}
It is important to recall that trivial examples of Miao-Tam critical metrics are geodesic balls in simply connected space forms. Furthermore, others explicit examples are warped product that include, for instance, the spatial Schwarzschild as well as the AdS-Schwarzschild metrics (see Corollaries 3.1 and 3.2 in \cite{miaotamTAMS} for more details).

The study of a such critical metric has been subject of interest for many authors. For instance, inspired by the work of Kobayashi and Obata \cite{kobayashi,obata},  Miao and Tam in \cite{miaotamTAMS}, studied these critical metrics under Einstein and locally conformally flat conditions. For the Einstein case, they were able to prove that critical points of the volume functional are precisely the geodesic balls in  simply connected space forms. In fact, the authors obtained the following result.

\begin{theorem}[Miao-Tam, \cite{miaotamTAMS}]
\label{thmMT}
Let $(M^{n},g,f)$ be a connected, compact Einstein Miao-Tam critical metric with smooth boundary $\partial M.$ Then $(M^{n},g)$ is isometric to a geodesic ball in a simply connected space form $\mathbb{R}^{n}$, $\mathbb{H}^{n}$ or $\mathbb{S}^{n}.$
\end{theorem}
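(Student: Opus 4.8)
The plan is to exploit the Einstein condition to collapse the governing system \eqref{eqMiaoTam1} into a single Obata-type equation for $f$, and then to invoke the rigidity of concircular potential functions. First I would use that an Einstein metric in dimension $n\geq 3$ has constant scalar curvature $R$ (Schur's lemma), so that $Ric_{g}=\frac{R}{n}g$. Substituting this into \eqref{eqMiaoTam1} gives
\[
Hess_{g}f=\Big(\Delta_{g}f+\tfrac{R}{n}f+1\Big)g,
\]
and taking the trace yields $\Delta_{g}f=-\frac{Rf+n}{n-1}$. Eliminating $\Delta_{g}f$ between these two identities shows that the trace-free Hessian of $f$ vanishes, i.e.
\[
Hess_{g}f=\frac{\Delta_{g}f}{n}\,g=:\varphi\, g .
\]
Here $f$ is nonconstant: by the boundary condition a constant solution would have to be identically zero, which forces $g=0$ in \eqref{eqMiaoTam1}, an absurdity. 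Thus $f$ is a nonconstant concircular potential function.

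Second I would extract the geometric consequences of $Hess_{g}f=\varphi\, g$. Since $\nabla|\nabla f|^{2}=2\,Hess_{g}f(\nabla f,\cdot)=2\varphi\,\nabla f$, the quantity $|\nabla f|^{2}$ is constant along the connected components of the regular level sets of $f$, and these level sets are totally umbilic with second fundamental form proportional to the induced metric. Differentiating the Hessian equation and commuting covariant derivatives (the Ricci identity) gives
\[
(\nabla_i\varphi)g_{jk}-(\nabla_j\varphi)g_{ik}=-R_{ijkl}\nabla^{l}f,
\]
and contracting in $j,k$ together with the Einstein condition forces $\nabla\varphi$ to be a multiple of $\nabla f$. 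Feeding this back into the displayed identity shows that every $2$-plane containing $\nabla f$ has one and the same sectional curvature. Combined with the Einstein hypothesis, this proves that $(M^{n},g)$ has constant sectional curvature on the open set where $\nabla f\neq 0$, and hence, by continuity, everywhere; the sign of the curvature is dictated by that of $R$.

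Finally I would reconstruct the global picture. Because $f\geq 0$ vanishes on $\partial M$, it attains an interior maximum at a point $p$ where $\nabla f(p)=0$; the Hessian equation identifies $p$ as the sole critical point and shows that, in geodesic normal coordinates around $p$, the metric is a warped product $dr^{2}+h(r)^{2}g_{\mathbb{S}^{n-1}}$ with $h$ the space-form warping function ($r$, $\sin r$ or $\sinh r$), while $f$ depends only on $r$. The boundary $\partial M=\{f=0\}$ is then a geodesic sphere centered at $p$, and smoothness of the metric at $p$ together with the constant-curvature model identifies $(M^{n},g)$ with a geodesic ball in $\mathbb{R}^{n}$, $\mathbb{H}^{n}$ or $\mathbb{S}^{n}$. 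This last step, upgrading the purely local concircular/warped-product description to a complete, smoothly closed-up geodesic ball and ruling out alternative configurations for the level-set foliation, is where I expect the main difficulty, and it is precisely the point at which the rigidity results of Obata \cite{obata} and Kobayashi \cite{kobayashi} for solutions of $Hess_{g}f=\varphi\, g$ enter.
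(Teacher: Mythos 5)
First, a structural remark: the paper does not prove this statement at all. It is quoted from Miao--Tam \cite{miaotamTAMS} and used as a black box at the end of the proofs of Theorems \ref{thmMainA}, \ref{thmMainB} and \ref{thmMainC}. So your proposal can only be compared with the original proof in \cite{miaotamTAMS}, and in outline it follows the same route: reduce, via $Ric=\tfrac{R}{n}g$ and the trace of \eqref{eqMiaoTam1}, to the concircular equation $Hess\,f=\varphi\,g$ with $\varphi=-\bigl(cf+\tfrac{1}{n-1}\bigr)$, $c=\tfrac{R}{n(n-1)}$, observe $f$ is nonconstant, and then run a Kobayashi--Obata type rigidity argument. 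Your first paragraph and your final paragraph (radial dependence of $f$ around the interior critical point $p$, rotational symmetry of $g$ in normal coordinates about $p$, and the ODE $h''=\varphi(h)$ forcing the warping function to be $r$, $\sin r$ or $\sinh r$) are faithful to that strategy and are where the real proof lives.

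The genuine flaw is in your second paragraph: the claim that the Ricci identity plus the Einstein condition yields constant sectional curvature on $\{\nabla f\neq0\}$. What the identity actually gives, after substituting $\nabla\varphi=-c\nabla f$, is $R_{ijkl}\nabla^{l}f=c\,(g_{jk}\nabla_{i}f-g_{ik}\nabla_{j}f)$, which for an Einstein metric is equivalent to $W(\cdot,\cdot,\cdot,\nabla f)=0$: every plane containing $\nabla f$ has curvature $c$, and nothing more. Einstein-ness only constrains traces and does not upgrade this to $W=0$ pointwise once $n\geq5$. Concretely, for $n=5$ take the Ricci-flat cone $g=dt^{2}+t^{2}g_{N}$ over $N=\mathbb{CP}^{2}$ with the Fubini--Study metric normalized so that $Ric_{N}=3g_{N}$, and $f=k-t^{2}/8$: this metric is Einstein, satisfies $Hess\,f=-\tfrac{1}{4}g$ (exactly the reduced equation with $R=0$, $n=5$), has $\nabla f\neq0$ everywhere, yet is nowhere of constant sectional curvature. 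It of course violates the boundary condition $f|_{\partial M}=0$, which is why it does not contradict the theorem; but it shows that constant curvature is \emph{not} a local consequence of Einstein plus concircular, and in dimensions $n\geq 5$ your step 2 is simply false. The constant-curvature conclusion comes only from the global input you use in paragraph 3: the interior critical point $p$ forces the level sets to be genuine small geodesic spheres, smoothness at $p$ makes the fibers round, and the ODE pins down the space-form warping. Since that paragraph carries the argument on its own (it needs only $Hess\,f=\varphi(f)g$ and $\nabla f(p)=0$, not the constant-curvature claim), the proof is repairable by deleting paragraph 2's conclusion; as written, however, it asserts a false intermediate statement precisely in the dimensions where it cannot be waved away.
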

On the other hand, for the locally conformally flat condition, the authors first constructed explicit examples of critical metrics which are in the form of warped products. To be more precise, if $M$ has disconnected boundary then, $\partial M$ has exactly two connected components, and $(M,g)$ is isometric to $(I\times N,dt^{2}+r^{2}g_{N}),$ where $I$ is a finite interval in $\mathbb{R}$ containing the origin $0,$ $(N,g_{N})$ is a compact without boundary manifold with constant sectional curvature $\kappa_{0},$ $r$ is a positive function on $I$ satisfying $r'(0)=0$ and
$$r''+\frac{R}{n(n-1)}r=ar^{1-n}$$
for some constant $a>0,$ and the constant $\kappa_{0}$ satisfies
$$(r')^{2}+\frac{R}{n(n-1)}r^{2}+\frac{2a}{n-2}r^{2-n}=\kappa_{0}.$$
For more details on this result see \cite[Theorem 1.2]{miaotamTAMS}.

Later on, Barros, Di\'ogenes, and Ribeiro  classified in \cite{BDR} these critical metrics for a $4$-dimensional simply connected manifold under Bach-flat assumption. Such result was improved by the first author, Batista, and Bezerra for a generic manifold in an arbitrary dimension (see \cite[Theorem 2]{BBB}). Meanwhile, Barros, and Da Silva presented in \cite{Adam}, an upper bound for the area of the boundary of a compact $n$-dimensional oriented Miao-Tam critical metric (see also \cite{BLF2016,BDRR}). For more references on the critical metrics of the volume functional, see \cite{baltazar,balt17,Adam,BaltRR,BDRR,Kim,Yuan}, and references therein.

In order to proceed, it is fundamental to remember that a Riemannian manifold is said to be weakly Einstein if the Riemannian curvature operator $Rm$ satisfies the following identity

\begin{equation}\label{eqwEinstein}
\breve{R}_{ij}=\frac{|Rm|^{2}}{n}g_{ij},
\end{equation}
where the $2$-tensor $\breve{R}_{ij}$ is defined by $\breve{R}_{ij}=R_{ipqr}R_{jpqr}.$ Such concept was introduced by Euh, Park and Sekigawa in \cite{eps} which was inspired by that of a super-Einstein manifold as defined in \cite{GW}.

Let us point out that weakly Einstein manifolds appear as critical points of a well-known quadratic functional on compact manifold without boundary. More precisely, Catino \cite{catino15} studied critical points of the functional
$$\mathcal{F}_{t,s}(g)=\int|Ric|^{2}dM+t\int R^{2}dM+s\int|Rm|^{2}dM, \qquad t,s\in\mathbb{R},$$
restricted to the space of equivalence classes of a smooth Riemannian metric with unit volume on $M$, denoted by $\mathcal{M}_{1}$. In this case, the author concluded that an Einstein metric is critical to the functional $\mathcal{F}_{t,s}$ on $\mathcal{M}_{1}$ if and only if the manifold $M$ is weakly Einstein. Moreover, considering a four dimensional manifold, it is not difficult to verify that Einstein manifolds are in fact weakly Einstein (cf. Remark~\ref{4DEinstein} in Section~\ref{Preliminaries}). We refer the readers to \cite{to15,eps,eps13,eps11,GW,HY16} for more results on this subject.

Here, we shall replace the assumption of Einstein in the Miao Tam result (see Theorem~\ref{thmMT}) by the weakly Einstein condition, which is weaker that the former one in low dimension. We now state our first result.

\begin{theorem}\label{thmMainA}
Let $(M^3,\,g,\,f)$ be a weakly Einstein Miao-Tam critical metric with nonnegative scalar curvature. Then $(M^{3},g)$ is isometric to a geodesic ball in a simply connected space form $\mathbb{R}^{3}$ or $\mathbb{S}^{3}.$
\end{theorem}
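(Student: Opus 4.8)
The plan is to exploit the fact that in dimension three the Weyl tensor vanishes, so the full curvature operator---and hence the tensor $\breve{R}_{ij}=R_{ipqr}R_{jpqr}$---is an algebraic expression in the Ricci tensor and the (constant) scalar curvature $R$. Writing the weakly Einstein equation $\breve{R}_{ij}=\frac{|Rm|^{2}}{3}g_{ij}$ in an orthonormal eigenbasis $\{e_1,e_2,e_3\}$ of $Ric$, the three sectional curvatures $K_{ij}=-\mu_k+\frac{R}{6}$ (where $\mu_1,\mu_2,\mu_3$ are the eigenvalues of the trace-free Ricci tensor $T_{ij}:=R_{ij}-\frac{R}{3}g_{ij}$ and $\{i,j,k\}=\{1,2,3\}$) are forced to satisfy $K_{12}^{2}=K_{13}^{2}=K_{23}^{2}$. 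First I would show that this algebraic constraint, together with $\mu_1+\mu_2+\mu_3=0$ and $R\ge 0$, leaves only two pointwise possibilities: either $T=0$ (the point is Einstein) or the eigenvalues of $Ric$ are $(R,0,0)$, in which case $|T|^{2}=\frac{2}{3}R^{2}$ and $R>0$. Since $|T|^{2}$ is continuous and takes only the two values $0$ and $\frac{2}{3}R^{2}$, connectedness of $M$ forces one alternative to hold on all of $M$.

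If $M$ is Einstein, Theorem~\ref{thmMT} applies directly and identifies $(M^{3},g)$ with a geodesic ball in $\mathbb{R}^{3}$, $\mathbb{H}^{3}$ or $\mathbb{S}^{3}$; the hypothesis $R\ge 0$ then rules out the hyperbolic case, giving the stated conclusion. The whole difficulty is therefore to discard the remaining ``rank-one'' alternative, in which $Ric$ has eigenvalues $(R,0,0)$ everywhere with $R>0$ constant. To attack it I would first extract the structural identities of a Miao-Tam metric. Tracing $(\ref{eqMiaoTam1})$ gives $\Delta f=-\frac{Rf+3}{2}$, its trace-free part yields $Hess\,f-\frac{\Delta f}{3}g=fT$, and, since $R$ is constant, the contracted second Bianchi identity gives $\di\,T=0$. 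Contracting $Hess\,f-\frac{\Delta f}{3}g=fT$ with $T$, integrating, and using $\di\,T=0$ together with $f|_{\partial M}=0$ and $\nabla f=(\partial_\nu f)\nu$ on $\partial M$, I obtain the key integral identity
\begin{equation*}
\int_{M} f\,|T|^{2}\,dM=\int_{\partial M}(\partial_\nu f)\,T(\nu,\nu)\,dA .
\end{equation*}

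In the rank-one case the left-hand side equals $\frac{2}{3}R^{2}\int_{M} f\,dM>0$, so it remains to analyse the boundary term and reach a contradiction. Here I would differentiate $(\ref{eqMiaoTam1})$ once more and commute derivatives to produce the Cotton-tensor identity for Miao-Tam metrics, which in dimension three expresses $fC_{ijk}$ solely through $T$ and $\nabla f$; on $\partial M$ the equation forces $Hess\,f=-\frac{1}{2}g$, so each boundary component is totally umbilic with $\partial_\nu f$ constant, and the Gauss equation together with Gauss-Bonnet converts $\int_{\partial M}T(\nu,\nu)\,dA$ into topological and mean-curvature data of $\partial M$. I expect the decisive step---and the main obstacle---to be precisely this boundary analysis: one must feed the weakly Einstein structure (equivalently the fixed value $|T|^{2}=\frac{2}{3}R^{2}$ and the rank-one form $Ric=R\,e_1\otimes e_1$) into the boundary term, show that $T(\nu,\nu)$ has a sign incompatible with the strictly positive bulk integral above (for instance by proving that $\nu$ must be aligned with the distinguished eigendirection $e_1$, or by combining the Cotton identity with a Bochner formula for $|T|^{2}$), and thereby rule out the rank-one alternative. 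Once this is done, $M$ is Einstein and the theorem follows as in the Einstein case.
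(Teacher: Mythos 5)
Your opening algebra is correct, and it is a genuinely different starting point from the paper's: in dimension three, with $W=0$, the weakly Einstein condition is indeed equivalent to $K_{12}^{2}=K_{13}^{2}=K_{23}^{2}$, and together with $\mu_1+\mu_2+\mu_3=0$ this forces, at each point, either $\mathring{Ric}=0$ or Ricci eigenvalues $(R,0,0)$ with $|\mathring{Ric}|^{2}=\tfrac{2}{3}R^{2}$; the continuity/connectedness argument and the treatment of the Einstein branch via Theorem~\ref{thmMT} (using $R\geq0$ to discard $\mathbb{H}^{3}$) are also fine. The problem is that the entire weight of the theorem then rests on excluding the rank-one branch, and that is exactly the step you do not carry out. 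You list candidate strategies (``proving that $\nu$ must be aligned with the distinguished eigendirection $e_1$'', Gauss--Bonnet, a Bochner formula) and you yourself label this ``the main obstacle.'' As written, the proposal establishes a dichotomy but provides no mechanism to decide which horn occurs: nothing in your argument relates the global eigenstructure of $Ric$ to the normal direction $\nu$ along $\partial M$, which is precisely what is needed to give the boundary term in your integral identity a sign.

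The missing mechanism is a second use of the weakly Einstein hypothesis, now evaluated at boundary points, and it is what the paper's boundary lemmas supply. Since $f|_{\partial M}=0$, identity (\ref{CTW}) (with $W=0$) gives $T_{ijk}\nabla_k f=0$ (here $T_{ijk}$ is the paper's $3$-tensor (\ref{TensorT}), not your trace-free Ricci), whence $\nabla f$ is an eigenvector of $Ric$ at each boundary point (Lemma~\ref{autoRIC}); contracting the weakly Einstein equation with $\nabla_i f\nabla_j f$ on $\partial M$ and using Lemmas~\ref{NormaTB}, \ref{auxnormaRIC} and \ref{EqRIC11} then yields the pointwise boundary identity $R\,\mathring{R}_{11}=|\mathring{Ric}|^{2}$, i.e.\ (\ref{3Dcase}). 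In your rank-one scenario this forces
\begin{equation*}
R\,\mathring{Ric}(\nu,\nu)=\tfrac{2}{3}R^{2}>0,\qquad\text{so}\qquad \mathring{Ric}(\nu,\nu)=\tfrac{2}{3}R>0,
\end{equation*}
i.e.\ $\nu$ is the $R$-eigendirection; since $\partial_\nu f=-|\nabla f|<0$, your identity $\int_{M}f|\mathring{Ric}|^{2}\,dM=\int_{\partial M}(\partial_\nu f)\,\mathring{Ric}(\nu,\nu)\,dA$ (which is Lemma~\ref{intauxRIC}) then has a strictly positive left side and a strictly negative right side --- the desired contradiction. Note, however, that once (\ref{3Dcase}) is available your dichotomy becomes superfluous: $R\geq0$ gives $\mathring{Ric}(\nabla f,\nabla f)\geq0$ on $\partial M$ directly, and Lemma~\ref{intauxRIC} forces $\mathring{Ric}\equiv0$, which is the paper's (shorter) proof. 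So your route can be completed, but only by importing exactly the boundary analysis you left open, after which the pointwise classification you worked out is no longer needed.
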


Before presenting our second result let us remember that the Riemannian product $\mathbb{S}^{2}\times\mathbb{H}^{2}$ with their standard metrics is weakly Einstein, but not Einstein. In addition, an $n$-dimensional, $n\geq5,$ Einstein metric does not necessarily is weakly Einstein, since the right hand side of Eq. (\ref{eqweakly}) will not necessarily null. After these considerations, we have the following theorem for a 4-dimensional weakly Einstein Miao-Tam critical metric.

\begin{theorem}\label{thmMainB}
Let $(M^4,\,g,\,f)$ be a weakly Einstein Miao-Tam critical metric with nonnegative scalar curvature. Then $(M^{4},g)$ is isometric to one of the following critical metrics:
\begin{itemize}
\item[$(i)$] The geodesic ball in a simply connected space form $\mathbb{R}^{4}$ or $\mathbb{S}^{4}.$

\item[$(ii)$]The standard cylinder over a compact without boundary manifold $(N,g_{N})$  with constant sectional curvature $\kappa_{0}>0$ with the product metric
$$\left(I\times N,dt^{2}+\frac{\kappa_{0}^{2}t^{2}+a}{\kappa_{0}}g_{N}\right),$$
where $I$ is a finite interval in $\mathbb{R}$ containing the origin $0$ and for some constant $a>0.$

\item[$(iii)$] $(M^{4},g)$ is covered by the above mentioned warped product in $(ii)$ with a covering group $\mathbb{Z}_{2}.$
\end{itemize}
\end{theorem}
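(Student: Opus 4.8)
My starting point would be to turn the weakly Einstein equation \eqref{eqwEinstein} into a relation involving only the Weyl tensor $W$, the traceless Ricci tensor $E_{ij}:=R_{ij}-\tfrac{R}{4}g_{ij}$ and the scalar curvature $R$. Inserting the orthogonal decomposition
\[
R_{ijkl}=W_{ijkl}+\frac{1}{n-2}\big(E_{ik}g_{jl}+E_{jl}g_{ik}-E_{il}g_{jk}-E_{jk}g_{il}\big)+\frac{R}{n(n-1)}\big(g_{ik}g_{jl}-g_{il}g_{jk}\big)
\]
into $\breve R_{ij}=R_{ipqr}R_{j}{}^{pqr}$ and expanding, one meets exactly three contractions that are special to dimension four: $W_{ipqr}W_{j}{}^{pqr}=\tfrac14|W|^{2}g_{ij}$; the self-contraction of the $E$-block is \emph{purely} $|E|^{2}g_{ij}$, the would-be $E_{ik}E_{j}{}^{k}$ term carrying the coefficient $2n-8$ and hence dying when $n=4$; and the $W$--$E$ and $E$--scalar cross terms produce multiples of $W_{ikjl}E^{kl}$ and of $RE_{ij}$. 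Taking the trace-free part, all pure-trace contributions cancel and \eqref{eqwEinstein} reduces to the single tensorial identity
\[
2\,W_{ikjl}E^{kl}+\frac{R}{3}\,E_{ij}=0.\qquad(\star)
\]
This is the structural heart of the theorem: it holds automatically for Einstein metrics ($E=0$) and, at the opposite extreme, for scalar-flat locally conformally flat metrics ($R=0$, $W=0$), which already predicts the two families in the statement.

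Next I would feed $(\star)$ into the Miao--Tam system. Tracing \eqref{eqMiaoTam1} gives $\Delta f=-\tfrac13(Rf+4)$, and its trace-free part reads $fE_{ij}=\nabla_i\nabla_j f-\tfrac14(\Delta f)g_{ij}$, so $fE$ is the trace-free Hessian of $f$ and $R$ is constant. Contracting $(\star)$ with $E^{ij}$ produces the pointwise scalar relation
\[
W_{ikjl}E^{ij}E^{kl}=-\frac{R}{6}\,|E|^{2},
\]
whose crucial feature is that the Weyl--Ricci coupling, precisely the indefinite term obstructing rigidity in the standard Bochner and Cotton-tensor computations for Miao--Tam metrics, is now pinned to the sign-definite quantity $-\tfrac{R}{6}|E|^{2}$ whenever $R\ge0$.

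With this in hand the plan is to run the weighted integral machinery for Miao--Tam critical metrics: multiply the Weitzenb\"{o}ck formula for $E$ (equivalently, use the fundamental identity $fC_{ijk}=W_{ijkl}\nabla^{l}f+(\text{terms linear in }E\text{ and }\nabla f)$ for the Cotton tensor $C_{ijk}$, obtained by differentiating \eqref{eqMiaoTam1}) by $f$ and integrate over $M$. Because $f|_{\partial M}=0$ the boundary integrals are controlled, and every occurrence of $W_{ikjl}E^{ij}E^{kl}$ is replaced, via the scalar relation above, by $-\tfrac{R}{6}|E|^{2}$. Using the constant-scalar-curvature normalization together with $R\ge0$, the resulting identity becomes sign-definite and separates into two regimes according to the constant value of $R$: when $R>0$ it forces $E\equiv0$, whereas when $R=0$ the equation $(\star)$ degenerates to $W_{ikjl}E^{kl}=0$ and the same integral identity forces the Weyl tensor to vanish identically.

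It then remains to identify the outcomes. If $E\equiv0$ the metric is Einstein, and Theorem \ref{thmMT} shows that $(M^{4},g)$ is a geodesic ball in a simply connected space form; the hyperbolic ball is excluded because its scalar curvature is negative, leaving $\mathbb{R}^{4}$ or $\mathbb{S}^{4}$, which is alternative $(i)$. If instead $R=0$ and $W\equiv0$, then $g$ is scalar-flat and locally conformally flat, so the Miao--Tam classification of locally conformally flat critical metrics (Theorem 1.2 of \cite{miaotamTAMS}) realizes $(M,g)$ as a warped product $dt^{2}+r(t)^{2}g_{N}$ over a space form $(N^{3},g_{N})$ of constant curvature $\kappa_{0}$; setting $R=0$ reduces the warping ODE to $r''=ar^{-3}$, whose relevant solution integrates to $r(t)^{2}=(\kappa_{0}^{2}t^{2}+a)/\kappa_{0}$, and the constraint $\kappa_{0}>0$ makes $N$ a spherical space form. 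This yields the cylinder of alternative $(ii)$, with alternative $(iii)$ arising as the $\mathbb{Z}_{2}$-quotient that identifies the two symmetric ends. In my view the main obstacle is the third step: organizing the weighted integral identity so that the genuinely cubic Ricci terms and the derivative terms are controlled and the sign of $R$ cleanly separates the Einstein regime from the scalar-flat one, and in particular upgrading $W_{ikjl}E^{kl}=0$ to $W\equiv0$ when $R=0$.
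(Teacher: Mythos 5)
Your reduction of the weakly Einstein condition in dimension four to the pointwise identity $(\star)$: $2W_{ikjl}E^{kl}+\tfrac{R}{3}E_{ij}=0$ is correct, and it is exactly the combination of Lemma~\ref{WEaux} with Derdzinski's identity (Lemma~\ref{Dedaux}) that the paper uses; the paper's equation \eqref{WR} is precisely $(\star)$ when $R=0$. Your endgame (Theorem~\ref{thmMT} for the Einstein regime, Theorem 1.2 of \cite{miaotamTAMS} for the scalar-flat locally conformally flat regime) also matches the paper. The genuine gap is the step you yourself flag as the ``main obstacle'': the weighted integral argument in the middle is not carried out, and as described it would not close. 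For $R>0$, the known Weitzenb\"ock/Cotton integral identities for Miao--Tam metrics contain cubic terms of the type $\mathring{R}_{ij}\mathring{R}_{jk}\mathring{R}_{ki}$ and derivative terms which $(\star)$ does not control; substituting $W_{ikjl}E^{ij}E^{kl}=-\tfrac{R}{6}|E|^{2}$ removes only the Weyl--Ricci coupling and leaves an expression with no definite sign. The paper avoids any interior Weitzenb\"ock computation altogether: since $f|_{\partial M}=0$, equation \eqref{CTW} forces $\nabla f$ to be a Ricci eigenvector on the boundary (Lemma~\ref{autoRIC}), and the boundary identities of Lemmas~\ref{NormaTB}, \ref{auxnormaRIC} and \ref{EqRIC11} combine with \eqref{normW} to give $R\mathring{R}_{11}\geq|W^{\nu}|^{2}\geq0$ on $\partial M$; then the sign-definite identity of Lemma~\ref{intauxRIC}, $\int_{M}f|\mathring{Ric}|^{2}dM+|\nabla f|^{-1}\int_{\partial M}\mathring{Ric}(\nabla f,\nabla f)d\sigma=0$, forces $\mathring{Ric}\equiv0$. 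Only the sign of $\mathring{Ric}(\nabla f,\nabla f)$ on the boundary is needed, never the sign of an interior Bochner expression.

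The gap is even more serious in the $R=0$ case. There you must prove $W\equiv0$ while $E$ is allowed to be nonzero (the cylinder in alternative $(ii)$ has $E\neq0$), so no identity built from a Weitzenb\"ock formula for $E$ can be ``coerced to vanish'' in the way you suggest; the assertion that ``the same integral identity forces the Weyl tensor to vanish identically'' is exactly the missing proof. The paper's mechanism is pointwise, not integral: differentiating \eqref{WR} and using the Cotton--Weyl relation \eqref{cottonwyel} yields \eqref{keyC}; multiplying by $f$ and using $fC_{ijk}=T_{ijk}+W_{ijkl}\nabla_{l}f$ together with the orthogonality $T_{ijk}W_{ijkl}\nabla_{l}f=0$ (itself a consequence of \eqref{WR}) gives the pointwise identity $\tfrac{1}{3}|T|^{2}+|\iota_{\nabla f}W|^{2}=0$, hence $\iota_{\nabla f}W=0$. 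The decisive four-dimensional step is then Derdzinski's identity again: $0=W_{ipqr}W_{jpqr}\nabla_{i}f\nabla_{j}f=\tfrac{1}{4}|W|^{2}|\nabla f|^{2}$, so $W$ vanishes wherever $\nabla f\neq0$, and analyticity of $(g,f)$ in harmonic coordinates upgrades this to $W\equiv0$ on $M$. Without this chain (or a substitute for it), your proposal establishes the theorem only modulo its two central analytic steps.
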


Based on the previous result, it is natural to ask what occurs in high dimension. In this sense, we shall prove a rigidity result for an $n$-dimensional Miao-Tam critical metric satisfying the weakly Einstein assumption and a suitable restriction on the Weyl tensor. More precisely, we have the following result.

\begin{theorem}\label{thmMainC}
Let $(M^n,\,g,\,f),$ $n\geq5,$ be a weakly Einstein Miao-Tam critical metric with nonpositive scalar curvature and  such that the Weyl tensor satisfies $W|_{T\partial M}=0.$ Then $(M^{n},g)$ is isometric to a geodesic ball in a simply connected space form $\mathbb{R}^{n}$  or $\mathbb{H}^{n}.$
\end{theorem}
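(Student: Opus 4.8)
The plan is to prove that the hypotheses force the metric to be Einstein, after which Theorem~\ref{thmMT}, combined with the sign of the scalar curvature, gives the stated rigidity. I would begin by recording the elementary consequences of the critical equation~\eqref{eqMiaoTam1}: tracing it yields $\Delta f=-\frac{Rf+n}{n-1}$, and since $R$ is constant the Cotton tensor simplifies to $C_{ijk}=\nabla_iR_{jk}-\nabla_jR_{ik}$. Restricting~\eqref{eqMiaoTam1} to the regular level set $\partial M=\{f=0\}$ gives $\mathrm{Hess}\,f=-\frac{1}{n-1}g$ there, so $\partial M$ is totally umbilic with constant mean curvature and $|\nabla f|$ is constant on each boundary component. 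By the Codazzi equation the mixed curvature components (one normal, three tangent) then vanish along $\partial M$; together with $W|_{T\partial M}=0$ this will let me control the Weyl tensor contracted with the normal $\nabla f$ along the boundary, which is exactly what is needed to discard the boundary integrals below.

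Next I would use the tensorial identity valid for Miao--Tam critical metrics that expresses $f\,C_{ijk}$ as $W_{ijkl}\nabla^l f$ plus terms built from the traceless Ricci tensor $T_{ij}:=R_{ij}-\frac{R}{n}g_{ij}$ contracted with $\nabla f$. The weakly Einstein hypothesis~\eqref{eqwEinstein} enters at this stage: decomposing $\breve R_{ij}$ through the splitting of $Rm$ into its Weyl part and its Schouten part turns~\eqref{eqwEinstein} into a pointwise relation among $\check W_{ij}=W_{iklm}W_{jklm}$, the quadratic Ricci terms, and $|Rm|^2$, whose traceless part ties the Weyl contribution to $T$. The aim of this computation is to assemble an integrand of the form $f\,|C|^2$ (equivalently $f\,|T|^2$ up to a Weyl term) whose overall sign is governed by that of $R$.

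The decisive step is integration by parts. Contracting the fundamental identity with $C^{ijk}$ and integrating over $M$, the factor $f$ annihilates most boundary contributions, while the one surviving boundary term carries $W_{ijkl}\nabla^l f$ with tangential free indices and vanishes because $\nabla f$ is normal to $\partial M$ and the relevant mixed components of $W$ are expressed through $W|_{T\partial M}=0$ and the mixed Schouten terms, themselves controlled by the umbilic boundary structure found above. With $R\le 0$ the resulting identity is sign-definite, forcing $T\equiv 0$; hence $(M^n,g)$ is Einstein. Theorem~\ref{thmMT} then identifies $(M^n,g)$ with a geodesic ball in $\mathbb{R}^n$, $\mathbb{H}^n$ or $\mathbb{S}^n$, and the hypothesis $R\le 0$ excludes the sphere, leaving $\mathbb{R}^n$ or $\mathbb{H}^n$.

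I expect the main obstacle to be the exploitation of~\eqref{eqwEinstein}: because it is quadratic in the full curvature, isolating a contribution proportional to $|T|^2$ with a definite sign, rather than an indefinite combination of $|W|^2$ and $|T|^2$, is delicate, and it is precisely here that the hypotheses $W|_{T\partial M}=0$ and $R\le 0$ must be brought to bear together. The boundary analysis through the Codazzi equation on the umbilic $\partial M$ is the second most technical point, since it converts the purely tangential Weyl condition into the vanishing of the normal--tangential contractions that appear in the boundary integral.
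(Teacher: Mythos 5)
Your global strategy (show the metric is Einstein, then invoke Theorem \ref{thmMT} and use $R\le 0$ to rule out $\mathbb{S}^n$) agrees with the paper, and your preliminary observations are correct: on $\partial M$ one has $\mathrm{Hess}\,f=-\frac{1}{n-1}g$, $|\nabla f|$ locally constant, an umbilic boundary, and, via Codazzi, the vanishing of $R_{abc\nu}$, which recovers the paper's Lemma \ref{autoRIC}. The gap is that your decisive step is never carried out, and as designed it cannot be. You propose to contract $fC_{ijk}=T_{ijk}+W_{ijkl}\nabla_{l}f$ with $C_{ijk}$, integrate by parts, and obtain an identity that is sign-definite when $R\le0$; but you never explain how the weakly Einstein condition --- a pointwise, symmetric-$2$-tensor identity quadratic in curvature, with no natural pairing against the $3$-tensor $C_{ijk}$ --- would enter that integral to produce the sign, and you yourself flag exactly this as the main obstacle. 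More decisively: your outline nowhere uses $n\ge5$, yet the statement with $4$ in place of $n\ge5$ is false. The scalar-flat cylinders in items (ii)--(iii) of Theorem \ref{thmMainB} are Miao--Tam critical metrics that are locally conformally flat with $R=0$, hence weakly Einstein (every term on the right of Lemma \ref{WEaux} vanishes when $n=4$, $W=0$, $R=0$), and they trivially satisfy $W|_{T\partial M}=0$ and $R\le0$, but they are not geodesic balls. Any valid argument must therefore see the dimension, and yours does not.

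Your intended use of the hypothesis $W|_{T\partial M}=0$ --- killing a boundary term carrying $W_{ijkl}\nabla^{l}f$ with tangential free indices --- is also misplaced: those components vanish on $\partial M$ for \emph{every} Miao--Tam critical metric, with no Weyl hypothesis at all, since $f=0$ on the boundary gives $W_{abcl}\nabla_{l}f=-T_{abc}$, and $T_{abc}=0$ (tangential indices $a,b,c$) by Lemma \ref{autoRIC}. So in your scheme the hypothesis would never genuinely act. In the paper it acts pointwise on $\partial M$, not through boundary terms of an integration by parts: contracting the weakly Einstein identity twice with $\nabla f$ gives Lemma \ref{EqRIC11}, which combined with Lemma \ref{auxnormaRIC} and the boundary decomposition (\ref{normW}) yields (\ref{R11 AUX}); the hypothesis $W|_{T\partial M}=0$ deletes the purely tangential term $W_{abcd}W_{abcd}$ from (\ref{normW}), leaving
$$\frac{2R}{n-1}\mathring{R}_{11}=-\frac{n^{2}-6n+6}{n-1}|W^{\nu}|^{2}-\frac{n-4}{n-2}|\mathring{Ric}|^{2},$$
and it is precisely the positivity of $n^{2}-6n+6$ and of $n-4$ for $n\ge5$ (both fail at $n=4$), together with $R\le0$, that forces $\mathring{R}_{11}\ge0$ on $\partial M$. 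Lemma \ref{intauxRIC} then has two nonnegative terms, so both vanish, giving $\mathring{Ric}\equiv0$; Theorem \ref{thmMT} and $R\le0$ conclude. This quantitative boundary computation is the actual proof, and it is absent from your proposal.
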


\section{Background}
\label{Preliminaries}

In this section we shall collect some basic results that will be useful in the proof of our main result. We begin by recalling some special tensors as well as some terminology in the study of curvature for a Riemannian manifold $(M^n,\,g),\,n\ge 3.$ The Weyl tensor $W$ is defined by the following decomposition formula
\begin{eqnarray}
\label{weyl}
R_{ijkl}&=&W_{ijkl}+\frac{1}{n-2}\big(R_{ik}g_{jl}+R_{jl}g_{ik}-R_{il}g_{jk}-R_{jk}g_{il}\big) \nonumber\\
 &&-\frac{R}{(n-1)(n-2)}\big(g_{jl}g_{ik}-g_{il}g_{jk}\big),
\end{eqnarray}
where $R_{ijkl}$ stands for the Riemannian curvature tensor, while the Cotton tensor $C$ is defined as follows
\begin{equation}
\label{cotton}
C_{ijk}=\nabla_{i}R_{jk}-\nabla_{j}R_{ik}-\frac{1}{2(n-1)}\big(\nabla_{i}R g_{jk}-\nabla_{j}Rg_{ik}).
\end{equation}
When $n\geq 4$ we have
\begin{equation}
\label{cottonwyel} C_{ijk}=-\frac{(n-2)}{(n-3)}\nabla_{l}W_{ijkl},
\end{equation}
for more details about these tensors we refer to \cite{besse}.

An important remark about the Cotton tensor $C_{ijk}$ is that it is skew-symmetric in the first two indexes and
trace-free in any index, that is,
\begin{equation}
\label{cottonprop} C_{ijk}=-C_{jik} \,\,\,\,\,\, {\rm and} \,\,\,\,\,\, g^{ij}C_{ijk}=g^{ik}C_{ijk}=0.
\end{equation}

Now, we recall that for operators $S,T:\mathcal{H} \to \mathcal{H}$ defined over an $n$-dimensional Hilbert space $\mathcal{H}$ the Hilbert-Schmidt inner product is defined according to
\begin{equation}
\langle S,T \rangle =\rm tr\big(ST^{*}\big), \label{inner}
\end{equation}
where $\rm tr$ and $*$ denote, respectively, the trace and the adjoint operation. Moreover, if $I$ denotes the identity operator on $\mathcal{H}$ the traceless operator of $T$ is given by
\begin{equation}
\label{eqtr1}
\mathring{T}=T - \frac{\rm tr T}{n}I.
\end{equation}

With this notation in mind, the following curvature expression can be deduced directly from (\ref{weyl}) and it will be quite useful for our purpose.

\begin{lemma}\label{WEaux}
Let $(M^{n},g)$ be a Riemannian manifold. Then the following identity holds:
\begin{eqnarray}\label{eqweakly}
\breve{R}_{ij}-\frac{|Rm|^{2}}{n}g_{ij}&=&\breve{W}_{ij}-\frac{|W|^{2}}{n}g_{ij}+\frac{4}{n-2}W_{ipjq}\mathring{R}_{pq}+\frac{2(n-4)}{(n-2)^{2}}\mathring{R}_{iq}\mathring{R}_{qj}\nonumber\\
&&+\frac{4R}{n(n-1)}\mathring{R}_{ij}-\frac{2(n-4)}{n(n-2)^{2}}|\mathring{Ric}|^{2}g_{ij}.
\end{eqnarray}
Here, $\breve{W}_{ij}$ is defined similarly to $\breve{R}_{ij}$ in $(\ref{eqwEinstein})$.

\end{lemma}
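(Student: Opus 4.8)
The plan is to establish (\ref{eqweakly}) by a direct tensorial computation: substitute the Weyl decomposition (\ref{weyl}) into the definition $\breve{R}_{ij}=R_{ipqr}R_{jpqr}$ and collect terms. First I would recast (\ref{weyl}) in terms of the traceless Ricci tensor $\mathring{R}_{ij}=R_{ij}-\frac{R}{n}g_{ij}$; absorbing the scalar part of $R_{ik}$ into the pure-scalar block and simplifying the coefficient $\frac{2}{n}-\frac{1}{n-1}=\frac{n-2}{n(n-1)}$ yields
\begin{equation*}
R_{ijkl}=W_{ijkl}+\frac{1}{n-2}\big(\mathring{R}_{ik}g_{jl}+\mathring{R}_{jl}g_{ik}-\mathring{R}_{il}g_{jk}-\mathring{R}_{jk}g_{il}\big)+\frac{R}{n(n-1)}\big(g_{ik}g_{jl}-g_{il}g_{jk}\big).
\end{equation*}
Write the three summands as $W_{ijkl}$, $P_{ijkl}$ and $S_{ijkl}$, so that $R_{ijkl}=W_{ijkl}+P_{ijkl}+S_{ijkl}$.

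Then I would expand $\breve{R}_{ij}=(W+P+S)_{ipqr}(W+P+S)_{jpqr}$ into its nine contractions. The $W$--$W$ block is $\breve{W}_{ij}$ by definition, and the key simplifications use that every trace of $W$ vanishes together with $g^{ij}\mathring{R}_{ij}=0$. These kill both $W$--$S$ cross terms outright, and reduce each $W$--$P$ cross term to a multiple of $W_{ipjq}\mathring{R}_{pq}$; adding the two and invoking the pair symmetry $W_{abcd}=W_{cdab}$ gives $\frac{4}{n-2}W_{ipjq}\mathring{R}_{pq}$. The remaining blocks are purely algebraic: a short contraction gives $S_{ipqr}S_{jpqr}=\frac{2R^{2}}{n^{2}(n-1)}g_{ij}$, the two $P$--$S$ cross terms combine to $\frac{4R}{n(n-1)}\mathring{R}_{ij}$, and the self-contraction $P_{ipqr}P_{jpqr}$ produces both the anisotropic part $\frac{2(n-4)}{(n-2)^{2}}\mathring{R}_{iq}\mathring{R}_{qj}$ and a leftover trace term $\frac{2}{(n-2)^{2}}|\mathring{Ric}|^{2}g_{ij}$.

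To pass to the stated form I would compute $|Rm|^{2}$ simply by tracing the expression for $\breve{R}_{ij}$ just obtained, since $|Rm|^{2}=g^{ij}\breve{R}_{ij}$; using $g^{pq}W_{pqij}=0$, $\mathrm{tr}\,\mathring{R}=0$ and $\mathrm{tr}(\mathring{R}^{2})=|\mathring{Ric}|^{2}$ this collapses to $|Rm|^{2}=|W|^{2}+\frac{4}{n-2}|\mathring{Ric}|^{2}+\frac{2R^{2}}{n(n-1)}$. Subtracting $\frac{|Rm|^{2}}{n}g_{ij}$ then cancels the $R^{2}g_{ij}$-terms exactly, turns the $|W|^{2}$ contribution into $-\frac{|W|^{2}}{n}g_{ij}$, and merges $\frac{2}{(n-2)^{2}}$ with $-\frac{4}{n(n-2)}$ into the coefficient $-\frac{2(n-4)}{n(n-2)^{2}}$ of $|\mathring{Ric}|^{2}g_{ij}$, which is exactly (\ref{eqweakly}).

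The main obstacle is nothing conceptual but the bookkeeping in the self-contraction $P_{ipqr}P_{jpqr}$: one must track all sixteen products of Kronecker deltas and traceless-Ricci factors, sorting which collapse to $\mathring{R}_{iq}\mathring{R}_{qj}$, which to $|\mathring{Ric}|^{2}g_{ij}$, and which vanish by trace-freeness, and then correctly reconcile the $g_{ij}$-proportional remainders with those coming from the $S$--$S$ block and from $\frac{|Rm|^{2}}{n}g_{ij}$. Once this accounting is done carefully, the identity follows by comparing coefficients.
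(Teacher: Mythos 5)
Your proposal is correct and takes essentially the same approach the paper intends: the paper omits the computation entirely, saying only that the identity ``can be deduced directly from (\ref{weyl})'', and your block-by-block expansion of $\breve{R}_{ij}$ via the traceless decomposition $R_{ijkl}=W_{ijkl}+P_{ijkl}+S_{ijkl}$ is precisely that deduction. All of your intermediate coefficients check out, including the cross terms $\frac{4}{n-2}W_{ipjq}\mathring{R}_{pq}$ and $\frac{4R}{n(n-1)}\mathring{R}_{ij}$, the self-contractions $S_{ipqr}S_{jpqr}=\frac{2R^{2}}{n^{2}(n-1)}g_{ij}$ and $P_{ipqr}P_{jpqr}=\frac{2(n-4)}{(n-2)^{2}}\mathring{R}_{iq}\mathring{R}_{qj}+\frac{2}{(n-2)^{2}}|\mathring{Ric}|^{2}g_{ij}$, the trace identity $|Rm|^{2}=|W|^{2}+\frac{4}{n-2}|\mathring{Ric}|^{2}+\frac{2R^{2}}{n(n-1)}$, and the final merge of coefficients into $-\frac{2(n-4)}{n(n-2)^{2}}$.
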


In the sequel, it is worth reporting here the following interesting formula involving the Weyl
tensor in dimension four. We refer readers to \cite{derd1} for its proof.

\begin{lemma}\label{Dedaux}
Let $(M^{4},g)$ be a four-dimensional Riemannian manifold. Then the following identity holds:
\begin{eqnarray*}
\breve{W}_{ij}=\frac{|W|^{2}}{4}g_{ij}.
\end{eqnarray*}
\end{lemma}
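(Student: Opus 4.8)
The plan is to argue pointwise, treating the stated identity as the purely algebraic fact that it is. First I would record that the trace is automatic: contracting $\breve{W}_{ij}=W_{ipqr}W_{jpqr}$ against $g^{ij}$ gives $g^{ij}\breve{W}_{ij}=|W|^{2}$, and in dimension four the tensor $\tfrac{|W|^{2}}{4}g_{ij}$ has exactly the same trace $|W|^{2}$. Thus it suffices to prove that the traceless part $\mathring{\breve{W}}$ vanishes, i.e. that $\breve{W}_{ij}$ is a pointwise multiple of $g_{ij}$.

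Second, I would bring in the special structure of the Weyl tensor in dimension four under the Hodge star. Fixing $p\in M$ and an orthonormal frame, view $W$ as a symmetric endomorphism $\mathcal{W}$ of the six-dimensional space $\Lambda^{2}$ of $2$-forms. In dimension four one has the orthogonal splitting $\Lambda^{2}=\Lambda^{+}\oplus\Lambda^{-}$ into the $(\pm1)$-eigenspaces of $\ast$, each of dimension three, and the decisive dimension-four phenomenon is that $\mathcal{W}$ commutes with $\ast$; hence $\mathcal{W}$ is block diagonal, $\mathcal{W}=\mathcal{W}^{+}\oplus\mathcal{W}^{-}$, with each $\mathcal{W}^{\pm}$ a trace-free symmetric $3\times3$ operator. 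I would then rotate the frame so that $\mathcal{W}^{+}$ and $\mathcal{W}^{-}$ are simultaneously diagonal in the associated Singer--Thorpe basis $\tfrac{1}{\sqrt2}(e_i\wedge e_j\pm e_k\wedge e_l)$ of $\Lambda^{2}$, with eigenvalues $\lambda_1,\lambda_2,\lambda_3$ and $\mu_1,\mu_2,\mu_3$ obeying $\sum_a\lambda_a=\sum_a\mu_a=0$. Reading off the curvature in this frame, the only components of $W$ that survive (up to the usual symmetries) are $W_{1212}=W_{3434}=\tfrac12(\lambda_1+\mu_1)$, $W_{1234}=\tfrac12(\lambda_1-\mu_1)$ and the two analogous triples for the remaining $2$-planes, while every mixed component such as $W_{1213}$ vanishes since it pairs orthogonal eigenvectors of $\mathcal{W}$.

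Finally, I would compute $\breve{W}_{ij}=\sum_{p,q,r}W_{ipqr}W_{jpqr}$ directly in this adapted frame. For $i\neq j$ each summand multiplies two Weyl components supported on disjoint index blocks, so $\breve{W}_{ij}=0$; and each diagonal entry $\breve{W}_{ii}$ collapses to the same frame-independent value $2\sum_{a}\big(\tfrac14(\lambda_a+\mu_a)^{2}+\tfrac14(\lambda_a-\mu_a)^{2}\big)=\sum_a(\lambda_a^{2}+\mu_a^{2})$. Since $|W|^{2}=\operatorname{tr}\breve{W}=\sum_i\breve{W}_{ii}=4\,\breve{W}_{11}$, this common value is precisely $\tfrac{|W|^{2}}{4}$, and we conclude $\breve{W}_{ij}=\tfrac{|W|^{2}}{4}g_{ij}$. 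The step I expect to be the real obstacle is justifying the block-diagonalisation, namely that $\mathcal{W}$ commutes with $\ast$: this is the genuinely four-dimensional input (it has no analogue in higher dimensions, as the remark preceding Theorem~\ref{thmMainB} already signals) and rests on the first Bianchi identity together with the total trace-freeness of $W$. The two subsequent steps are then careful but routine bookkeeping in the Singer--Thorpe frame. One could also sidestep the frame computation entirely with representation theory: $W\mapsto\mathring{\breve{W}}$ is an $SO(4)$-equivariant quadratic map from Weyl tensors into trace-free symmetric $2$-tensors, and the target irreducible summand simply does not appear in the symmetric square of $\mathcal{W}^{+}\oplus\mathcal{W}^{-}$, forcing it to vanish.
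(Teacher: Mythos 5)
Your argument is correct, but there is nothing in the paper to compare it against line by line: the paper does not prove this lemma at all, it simply defers to Derdzinski \cite{derd1}. What you have written is, in essence, the classical argument that underlies that citation. Your two structural inputs are both sound: (i) in dimension four the Weyl operator is block diagonal with respect to $\Lambda^{2}=\Lambda^{+}\oplus\Lambda^{-}$, since the off-diagonal block of any algebraic curvature operator is identified (via the first Bianchi identity and a trace computation) with its traceless Ricci part, which vanishes for $W$; and (ii) because $W^{+}$ and $W^{-}$ act on different three-dimensional factors and $SO(4)\to SO(3)\times SO(3)$ is surjective, a Singer--Thorpe frame diagonalizing both simultaneously exists. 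The bookkeeping then checks out: with $a_{1}=\tfrac12(\lambda_{1}+\mu_{1})$, $b_{1}=\tfrac12(\lambda_{1}-\mu_{1})$ and their analogues, one gets $\breve{W}_{ii}=2\sum_{a}(a_{a}^{2}+b_{a}^{2})=\sum_{a}(\lambda_{a}^{2}+\mu_{a}^{2})$ for every $i$, each off-diagonal entry $\breve{W}_{ij}$ pairs Weyl components supported on disjoint index blocks and vanishes, and the common diagonal value is forced to be $\tfrac14\operatorname{tr}\breve{W}=\tfrac14|W|^{2}$. Your representation-theoretic aside is also valid, and arguably the cleanest route: as a $Spin(4)$-module the trace-free symmetric $2$-tensors form the irreducible summand $(2,2)$, which does not occur in $\mathrm{Sym}^{2}\big((4,0)\oplus(0,4)\big)$, so the traceless part of any equivariant quadratic expression in $W$ must vanish identically. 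The only caution worth recording is notational: the eigenvalue normalization of the curvature operator varies between references (factors of $2$ in $\langle\mathcal{W}(e_{i}\wedge e_{j}),e_{k}\wedge e_{l}\rangle=W_{ijkl}$), but your computation is internally consistent, which is all that is needed.
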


\begin{remark}\label{4DEinstein}
As a consequence of these two previous lemmas is that a four-dimensional Einstein metric is immediately weakly Einstein.
\end{remark}

Next, we remember that the fundamental equation of Miao-Tam critical metric is given by
\begin{equation}
\label{eqfund1} -(\Delta f)g+Hess f-fRic=g.
\end{equation}
Taking the trace of Eq. (\ref{eqfund1}) we arrive at
\begin{equation}
\label{eqtrace} \Delta f +\frac{fR+n}{n-1}=0.
\end{equation}

By using (\ref{eqtrace}) we check immediately that
\begin{equation}
\label{IdRicHess} f\mathring{Ric}=\mathring{Hess f}.
\end{equation}

To fix notations, for a Miao-Tam critical metric, we recall the following 3-tensor defined in \cite{BDR},
\begin{eqnarray}\label{TensorT}
T_{ijk}&=&\frac{n-1}{n-2}(R_{ik}\nabla_{j}f-R_{jk}\nabla_{i}f)-\frac{R}{n-2}(g_{ik}\nabla_{j}f-g_{jk}\nabla_{i}f)\nonumber\\
&&+\frac{1}{n-2}(g_{ik}R_{js}\nabla_{s}f-g_{jk}R_{is}\nabla_{s}f).
\end{eqnarray}
Note that, $T_{ijk}$ has the same symmetry properties as the Cotton tensor:
$$T_{ijk}=-T_{jik}\;\;\;and\;\;\;g^{ij}T_{ijk}=g^{ik}T_{ijk}=0.$$
Furthermore, we remember that the tensor $T_{ijk}$ is related with to the Cotton tensor $C_{ijk}$ as well as the Weyl tensor $W_{ijkl}$ by
\begin{equation}\label{CTW}
fC_{ijk}=T_{ijk}+W_{ijkl}\nabla_{l}f.
\end{equation}

To close this section, let us recall a result that can be found in \cite[Lemma 5]{BDRR}.

\begin{lemma}\label{intauxRIC}
Let $(M^{n},g,f)$ be a Miao-Tam critical metric. Then we have:
$$\int_{M}f|\mathring{Ric}|^{2}dM+\frac{1}{|\nabla f|}\int_{\partial M}\mathring{Ric}(\nabla f,\nabla f)d\sigma=0.$$
\end{lemma}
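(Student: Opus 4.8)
The plan is to extract the identity directly from the trace-free form of the fundamental equation, namely (\ref{IdRicHess}), by pairing it with $\mathring{Ric}$ and integrating by parts. First I would contract $f\mathring{Ric}=\mathring{Hess f}$ with $\mathring{Ric}$. Since $\mathring{Ric}$ is trace-free, the pure-trace part of the Hessian is annihilated, so
\begin{equation*}
f|\mathring{Ric}|^{2}=\langle f\mathring{Ric},\mathring{Ric}\rangle=\langle\mathring{Hess f},\mathring{Ric}\rangle=\langle Hess f,\mathring{Ric}\rangle=\mathring{R}_{ij}\nabla_{i}\nabla_{j}f,
\end{equation*}
using $\langle\mathring{Ric},g\rangle=\mathrm{tr}\,\mathring{Ric}=0$. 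Integrating over $M$ and writing $\mathring{R}_{ij}\nabla_{i}\nabla_{j}f=\nabla_{i}\big(\mathring{R}_{ij}\nabla_{j}f\big)-\big(\nabla_{i}\mathring{R}_{ij}\big)\nabla_{j}f$, the divergence theorem gives
\begin{equation*}
\int_{M}f|\mathring{Ric}|^{2}\,dM=\int_{\partial M}\mathring{Ric}(\nabla f,\nu)\,d\sigma-\int_{M}\big(\mathrm{div}\,\mathring{Ric}\big)(\nabla f)\,dM,
\end{equation*}
where $\nu$ denotes the outward unit normal along $\partial M$.

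Next I would dispose of the interior term. By the contracted second Bianchi identity $\nabla_{i}R_{ij}=\tfrac12\nabla_{j}R$, so $\mathrm{div}\,\mathring{Ric}=\tfrac{n-2}{2n}\nabla R$. Because a Miao-Tam critical metric has constant scalar curvature, $\nabla R=0$ and this bulk integral vanishes identically, leaving only the boundary contribution.

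It then remains to convert the boundary integral into the stated form. Since $f$ is constant (equal to zero) on $\partial M$, its tangential gradient vanishes there, so $\nabla f$ is purely normal; moreover $\nabla f$ is nonvanishing on $\partial M$, and $|\nabla f|$ is a positive constant on $\partial M$ (this follows from evaluating (\ref{eqfund1}) and (\ref{eqtrace}) at $f=0$, which forces $Hess f=-\tfrac{1}{n-1}g$ on the boundary, so the tangential derivative of $|\nabla f|^{2}$ vanishes). As $f\geq0$ attains its minimum on $\partial M$, the gradient points inward, $\nabla f=-|\nabla f|\,\nu$, whence $\mathring{Ric}(\nabla f,\nu)=-|\nabla f|\,\mathring{Ric}(\nu,\nu)=-\tfrac{1}{|\nabla f|}\mathring{Ric}(\nabla f,\nabla f)$. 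Factoring the constant $|\nabla f|$ out of the boundary integral and combining with the previous steps yields
\begin{equation*}
\int_{M}f|\mathring{Ric}|^{2}\,dM=-\frac{1}{|\nabla f|}\int_{\partial M}\mathring{Ric}(\nabla f,\nabla f)\,d\sigma,
\end{equation*}
which is the asserted identity after rearrangement.

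I expect the only delicate points to be bookkeeping rather than conceptual: getting the sign of the boundary term correct through the inward-pointing relation $\nabla f=-|\nabla f|\,\nu$, and justifying that $|\nabla f|$ is a nonzero constant on $\partial M$ so that it may legitimately be pulled outside the integral. Both rest on the boundary value $Hess f=-\tfrac{1}{n-1}g$ extracted from the fundamental equation, together with the standard fact that $\partial M$ is a regular level set of $f$.
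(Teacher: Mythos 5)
Your proof is correct, and it is essentially the standard argument behind this result: the paper itself offers no proof but quotes the identity from Lemma 5 of \cite{BDRR}, where the derivation is the same one you give --- pair $f\mathring{Ric}=\mathring{Hess f}$ with $\mathring{Ric}$, integrate by parts using $\operatorname{div}\mathring{Ric}=\frac{n-2}{2n}\nabla R=0$ (constant scalar curvature), and use that $\nabla f=-|\nabla f|\nu$ on $\partial M$. Your treatment of the delicate points is also the right one: the boundary relation $Hess\, f=-\frac{1}{n-1}g$ obtained from (\ref{eqfund1}) and (\ref{eqtrace}) at $f=0$ simultaneously forces $|\nabla f|$ to be locally constant and nonvanishing on $\partial M$, which justifies both the sign of the boundary term and pulling $\frac{1}{|\nabla f|}$ outside the integral.
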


\section{Weakly Einstein Miao-Tam critical metrics}

The aim of this section is to prove the rigidity of the Miao-Tam critical metrics under assumption that our Riemannian manifold is weakly Einstein. In order to prove our theorems, we shall start showing that at a point $q\in \partial M$, $\nabla f$ is an eigenvector of the Ricci tensor. More precisely, we have the following result.

\begin{lemma}\label{autoRIC}
Let $(M^{n},g,f)$ be a Miao-Tam critical metric. Then $\nabla f$ is an eigenvector to the Ricci tensor for all $q\in\partial M$.
\end{lemma}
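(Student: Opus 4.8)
The plan is to differentiate the fundamental equation once, antisymmetrize so as to bring in the curvature tensor, and then specialize everything to the boundary, where both $f$ and the tangential behaviour of $\Delta f$ are completely controlled.

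First I would rewrite the fundamental equation (\ref{eqfund1}) in the form
$$Hess\, f=(1+\Delta f)\,g+f\,Ric.$$
Since $f\equiv 0$ on $\partial M$, the gradient $\nabla f$ is normal to $\partial M$, and by the Hopf boundary lemma (using $f>0$ in the interior) it is nonzero there. Hence at a fixed $q\in\partial M$ I may pick an orthonormal frame $\{e_1,\dots,e_{n-1},e_n\}$ with $e_n=\nabla f/|\nabla f|$ and $e_1,\dots,e_{n-1}$ tangent to $\partial M$, so that $\nabla_l f=|\nabla f|\,\delta_{ln}$. The goal then reduces to showing $R_{an}=Ric(e_a,e_n)=0$ for every tangential index $a$, since this forces $Ric(\nabla f)$ to be a multiple of $\nabla f$.

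Next I would apply $\nabla_i$ to the displayed equation, write the analogue with $i$ and $j$ interchanged, and subtract. The left-hand side becomes a commutator of third derivatives of $f$, which by the Ricci identity equals $-R_{ijkl}\nabla_l f$. Evaluating at $q$, where $f=0$ annihilates the terms containing $f\,\nabla_i R_{jk}$, yields
$$-R_{ijkl}\nabla_l f=\nabla_i(\Delta f)\,g_{jk}-\nabla_j(\Delta f)\,g_{ik}+\nabla_i f\,R_{jk}-\nabla_j f\,R_{ik}.$$
Specializing $(i,j,k)=(a,n,n)$ collapses the left-hand side to zero, by antisymmetry of $R_{ijkl}$ in its last pair of indices, and leaves $0=\nabla_a(\Delta f)-|\nabla f|\,R_{an}$, that is, $R_{an}=|\nabla f|^{-1}\,\nabla_a(\Delta f)$.

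Finally I would invoke the trace equation (\ref{eqtrace}): on $\partial M$ one has $f\equiv0$, so $\Delta f\equiv-\tfrac{n}{n-1}$ is constant along the entire boundary; its derivative in any tangential direction therefore vanishes, $\nabla_a(\Delta f)=0$, and the identity above gives $R_{an}=0$ for all tangential $a$, which is exactly the claim. The substantive point is the index bookkeeping in the antisymmetrization together with the choice of the specialization $(a,n,n)$ that isolates the mixed Ricci component; once this is arranged, the vanishing on the boundary is immediate. I expect the \emph{main obstacle} to be keeping the curvature-commutator sign convention consistent with the decomposition (\ref{weyl}), so that the relevant contraction produces a clean $R_{an}$ rather than an entangled combination of curvature terms.
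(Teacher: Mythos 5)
Your proof is correct, but it takes a genuinely different route from the paper's. The paper does not differentiate the fundamental equation at all: it contracts the prepared identity (\ref{CTW}), $fC_{ijk}=T_{ijk}+W_{ijkl}\nabla_{l}f$, with $\nabla_{k}f$; since $f|_{\partial M}=0$ and $W$ is antisymmetric in its last pair of indices, this gives $T_{ijk}\nabla_{k}f=0$ on $\partial M$, which by the definition (\ref{TensorT}) collapses to $R_{jk}\nabla_{k}f\,\nabla_{i}f-R_{ik}\nabla_{k}f\,\nabla_{j}f=0$, i.e.\ $Ric(\nabla f)\wedge\nabla f=0$; diagonalizing $Ric$ then finishes. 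Your argument instead reproves, from scratch, exactly the content of (\ref{CTW}) contracted with $\nabla f$ on the boundary: differentiating $Hess\,f=(1+\Delta f)g+fRic$, antisymmetrizing, using the Ricci identity, and letting $f=0$ kill the $f\,\nabla Ric$ terms. What the paper's route buys is brevity and reuse — the tensor $T$ and identity (\ref{CTW}) are already set up in Section 2 and are needed again in Lemmas \ref{NormaTB}--\ref{EqRIC11}; what your route buys is self-containedness — no Cotton tensor, no Weyl decomposition, no auxiliary 3-tensor. Two small remarks. First, your worry about the curvature sign convention is moot: after specializing $(i,j,k)=(a,n,n)$ and contracting with $\nabla_{l}f\propto\delta_{ln}$, the curvature term is (up to sign) $R_{annn}$, which vanishes by antisymmetry in the last index pair under any convention. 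Second, your Hopf-lemma step needs a standard adjustment: the trace equation gives $\Delta f+\frac{R}{n-1}f=-\frac{n}{n-1}<0$, and the zeroth-order coefficient $\frac{R}{n-1}$ need not be nonpositive; since $f\geq 0$ one replaces it by its negative part to apply Hopf, or simply cites Miao--Tam, where the non-vanishing of $\nabla f$ on $\partial M$ is established. (The paper avoids this point altogether by working only at boundary points where $\nabla f(q)\neq 0$, the statement being vacuous otherwise.)
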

\begin{proof}
In fact,  we consider  an orthonormal frame $\{e_{1},\ldots,e_{n}\}$ diagonalizing $Ric$ at a point $q\in \partial M^{n}$ such that $\nabla f(q)\neq0$, namely, we have that $Ric(e_{i})=\alpha_{i}e_{i}$, where $\alpha_{i}$ are the associated eigenvalues.

Since we already know that $f|_{\partial M}=0,$ it follows from Eq. (\ref{CTW}) that
$$0=fC_{ijk}\nabla_{k}f=T_{ijk}\nabla_{k}f.$$
Thus, by definition of the tensor $T_{ijk},$ see Eq. (\ref{TensorT}), we get
$$R_{jk}\nabla_{k}f\nabla_{i}f-R_{ik}\nabla_{k}f\nabla_{j}f=0,$$
and consequently, computing this last expression at $q\in \partial M,$ we deduce
\begin{equation}\label{lambda}
(\alpha_{j}-\alpha_{i})\nabla_{i}f\nabla_{j}f=0.
\end{equation}
Hence, if we consider the following nonempty set $L=\{i;\nabla_{i}f\neq0\},$ then from (\ref{lambda}) we have $\alpha_{i}=\alpha$, for all $i\in L,$ and therefore
$$Ric(\nabla f)=Ric\Big(\displaystyle\sum_{i\in L}\nabla_{i}fe_{i}\Big)=\sum_{i\in L}\nabla_{i}f\alpha_{i}e_{i}=\alpha\nabla f.$$
This is what we want to prove.
\end{proof}

In what follows, let $q\in M^{n}$ be an arbitrary point on $\partial M$ and consider an orthonormal frame  $\{e_{1},e_{2},\ldots, e_{n}\}$ in a neighborhood of $q$ contained in $M$ such that $e_{1}=-\frac{\nabla f}{|\nabla f|}.$

In the sequel, we obtain an important expression in order to prove our results. More precisely, we shall compute the norm of the auxiliary tensor $T_{ijk}$ on $\partial M$ of a Miao-Tam critical metric.

\begin{lemma}\label{NormaTB}
Let $(M^n,g,f)$ be a Miao-Tam critical metric. Then, restrict to the boundary $\partial M,$ the following identity holds
$$|T_{ijk}|^{2}=\frac{2(n-1)^{2}}{(n-2)^{2}}  |\mathring{Ric}|^{2}|\nabla f|^{2} -\frac{2n(n-1)}{(n-2)^{2}}\mathring{R}_{11}^{2}|\nabla f|^{2}.$$
\end{lemma}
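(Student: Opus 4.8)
The plan is to evaluate $T_{ijk}$ directly on $\partial M$ in the adapted orthonormal frame $\{e_{1},\ldots,e_{n}\}$ with $e_{1}=-\nabla f/|\nabla f|$, and then take the squared norm. Two simplifications are available along the boundary. First, since $f|_{\partial M}=0$ and $e_{1}$ is, up to sign, the unit normal, the components of $\nabla f$ collapse to $\nabla_{i}f=-|\nabla f|\,\delta_{i1}$. Second, Lemma~\ref{autoRIC} guarantees that $\nabla f$ is an eigenvector of $Ric$ along $\partial M$, so in this frame $R_{1i}=R_{11}\delta_{1i}$; in particular $R_{is}\nabla_{s}f=-|\nabla f|\,R_{11}\,\delta_{i1}$.

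Substituting these two facts into the definition $(\ref{TensorT})$ of $T_{ijk}$, every term acquires the factor $-|\nabla f|$ and the three contributions combine into a single expression of the form
$$T_{ijk}=\frac{-|\nabla f|}{n-2}\big(P_{ik}\delta_{j1}-P_{jk}\delta_{i1}\big),\qquad P_{ik}:=(n-1)R_{ik}+(R_{11}-R)g_{ik},$$
which is manifestly antisymmetric in $i,j$, as it must be. Because of the delta factors, the contraction $T_{ijk}T_{ijk}$ then reduces cleanly to
$$|T_{ijk}|^{2}=\frac{|\nabla f|^{2}}{(n-2)^{2}}\Big(2\,|P|^{2}-2\sum_{k}P_{1k}^{2}\Big).$$
Here $|P|^{2}=(n-1)^{2}|Ric|^{2}+2(n-1)(R_{11}-R)R+n(R_{11}-R)^{2}$, using $g^{ik}R_{ik}=R$ and $g^{ik}g_{ik}=n$, while the eigenvector relation gives $P_{1k}=(nR_{11}-R)\delta_{1k}$, so that $\sum_{k}P_{1k}^{2}=(nR_{11}-R)^{2}$.

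Finally I would pass to the traceless Ricci tensor via $R_{ij}=\mathring{R}_{ij}+\frac{R}{n}g_{ij}$. The two key identities are $|Ric|^{2}=|\mathring{Ric}|^{2}+\frac{R^{2}}{n}$ and the clean relation $nR_{11}-R=n\mathring{R}_{11}$, whence $(nR_{11}-R)^{2}=n^{2}\mathring{R}_{11}^{2}$. Substituting and expanding, all the terms containing $R^{2}$ and all the cross terms $R\,\mathring{R}_{11}$ cancel, leaving
$$2\,|P|^{2}-2\sum_{k}P_{1k}^{2}=2(n-1)^{2}|\mathring{Ric}|^{2}-2n(n-1)\mathring{R}_{11}^{2},$$
which yields the stated formula after multiplying by $|\nabla f|^{2}/(n-2)^{2}$. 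The only real difficulty here is organizational: keeping track of the several curvature scalars ($|Ric|^{2}$, $R$, and $R_{11}$) through the contraction and verifying that the $R^{2}$ and $R\,\mathring{R}_{11}$ contributions indeed cancel. There is no conceptual obstacle beyond this bookkeeping, the two structural inputs being the boundary collapse of $\nabla f$ and the eigenvector property of Lemma~\ref{autoRIC}.
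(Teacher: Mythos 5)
Your proposal is correct: I checked the frame computation, the collapse of $T_{ijk}$ to $\frac{-|\nabla f|}{n-2}(P_{ik}\delta_{j1}-P_{jk}\delta_{i1})$ with $P_{ik}=(n-1)R_{ik}+(R_{11}-R)g_{ik}$, the contraction $2|P|^{2}-2\sum_{k}P_{1k}^{2}$, and the final passage to the traceless Ricci tensor (where indeed $|P|^{2}=(n-1)^{2}|\mathring{Ric}|^{2}+n\mathring{R}_{11}^{2}$ and $\sum_{k}P_{1k}^{2}=n^{2}\mathring{R}_{11}^{2}$, so the $R^{2}$ and cross terms cancel), and everything goes through. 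Your route rests on exactly the same two inputs as the paper's proof, namely the definition of $T_{ijk}$ and the eigenvector property of Lemma~\ref{autoRIC}, but the order of operations is reversed in a way worth noting. The paper first expands $|T_{ijk}|^{2}$ invariantly from the definition, obtaining a general identity valid at every point of $M$,
$$|T_{ijk}|^{2}=\tfrac{2(n-1)^{2}}{(n-2)^{2}}|Ric|^{2}|\nabla f|^{2}-\tfrac{2n(n-1)}{(n-2)^{2}}|Ric(\nabla f)|^{2}+\tfrac{4(n-1)}{(n-2)^{2}}R\,Ric(\nabla f,\nabla f)-\tfrac{2(n-1)}{(n-2)^{2}}R^{2}|\nabla f|^{2},$$
rewrites it in terms of $\mathring{Ric}$, and only then specializes to the boundary using $R_{1a}=0$. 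You instead specialize immediately: the boundary conditions collapse the tensor itself into the rank-one-in-$(i,j)$ form involving $P_{ik}$, and the norm computation becomes elementary bookkeeping on a much simpler object. Your organization avoids the lengthy invariant cross-term expansion (which the paper states without detail), at the cost of producing only the boundary identity rather than the intermediate pointwise formula; since the lemma is a boundary statement, nothing is lost, and your version is arguably the more transparent of the two.
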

\begin{proof}We start the proof by using Eq. (\ref{TensorT}) to arrive at
\begin{eqnarray*}
|T_{ijk}|^{2}&=&\frac{2(n-1)^{2}}{(n-2)^{2}}|Ric|^{2}|\nabla f|^{2}-\frac{2n(n-1)}{(n-2)^{2}}|Ric(\nabla f)|^{2}\\
&&+\frac{4(n-1)}{(n-2)^{2}}RRic(\nabla f,\nabla f)-\frac{2(n-1)}{(n-2)^{2}}R^{2}|\nabla f|^{2}\\
&=&\frac{2(n-1)^{2}}{(n-2)^{2}}|\mathring{Ric}|^{2}|\nabla f|^{2}-\frac{2n(n-1)}{(n-2)^{2}}|Ric(\nabla f)|^{2}\\
&&+\frac{4(n-1)}{(n-2)^{2}}R\mathring{Ric}(\nabla f,\nabla f)+\frac{2(n-1)}{n(n-2)^{2}}R^{2}|\nabla f|^{2}.
\end{eqnarray*}
Now, from Lemma~\ref{autoRIC} we have that $R_{1a}=0$ for any $2\leq a\leq n$, and consequently the above expression becomes
\begin{eqnarray*}
|T_{ijk}|^{2}&=&\frac{2(n-1)^{2}}{(n-2)^{2}}|\mathring{Ric}|^{2}|\nabla f|^{2}-\frac{2n(n-1)}{(n-2)^{2}}R^{2}_{11}|\nabla f|^{2}\\
&&+\frac{4(n-1)}{(n-2)^{2}}R\mathring{R}_{11}|\nabla f|^{2}+\frac{2(n-1)}{n(n-2)^{2}}R^{2}|\nabla f|^{2}\\
&=&\frac{2(n-1)^{2}}{(n-2)^{2}}|\mathring{Ric}|^{2}|\nabla f|^{2}-\frac{2n(n-1)}{(n-2)^{2}}\mathring{R}^{2}_{11}|\nabla f|^{2}.
\end{eqnarray*}
So, the proof is completed.
\end{proof}

Proceeding, we shall verify that the same identity obtained in \cite[Lemma 2.2]{HY16} is also true for a Miao-Tam critical metric. More precisely, we have:
\begin{lemma}\label{auxnormaRIC}
Let $(M,g,f)$ be a Miao-tam critical metric. Then, restrict to the boundary $\partial M,$ we have the following identity
$$|\mathring{Ric}|^{2}=\frac{n}{n-1}\mathring{R}_{11}^{2}+\left(\frac{n-2}{n-1}\right)^{2}|W^{\nu}|^{2},$$
where $W^{\nu}_{ij}=W_{i1j1}.$
\end{lemma}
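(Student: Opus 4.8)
The plan is to work in the adapted orthonormal frame already fixed above, with $e_1=-\nabla f/|\nabla f|$ and $e_2,\dots,e_n$ tangent to $\partial M$, and to split $|\mathring{Ric}|^2$ into a normal block and a tangential block. By Lemma~\ref{autoRIC} we have $R_{1a}=0$ for every $a\ge 2$, hence $\mathring{R}_{1a}=R_{1a}-\frac{R}{n}g_{1a}=0$ as well. Consequently all the mixed terms drop out and
\[
|\mathring{Ric}|^2=\mathring{R}_{11}^2+\sum_{a,b\ge 2}\mathring{R}_{ab}^2 .
\]
Thus the whole problem reduces to expressing the tangential block $\sum_{a,b\ge2}\mathring{R}_{ab}^2$ in terms of $\mathring{R}_{11}^2$ and $|W^\nu|^2$.

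First I would extract a pointwise relation between the tangential Ricci and $W^\nu$. Since $f$ vanishes on $\partial M$, evaluating the identity (\ref{CTW}) on the boundary gives $T_{ijk}=-W_{ijkl}\nabla_l f=|\nabla f|\,W_{ijk1}$, because $\nabla_l f=-|\nabla f|\delta_{l1}$ in our frame. I would then specialize this to the indices $i=a,\ j=1,\ k=b$ with $a,b\ge 2$: on one side $T_{a1b}=|\nabla f|\,W_{a1b1}=|\nabla f|\,W^\nu_{ab}$, while expanding the definition (\ref{TensorT}) of $T_{a1b}$ and using $R_{1a}=0$ together with $\nabla_a f=0$ collapses all but a few terms, yielding
\[
W^\nu_{ab}=-\frac{n-1}{n-2}R_{ab}+\frac{R-R_{11}}{n-2}\,\delta_{ab}.
\]
Solving for $R_{ab}$ and passing to traceless quantities (replacing $R-nR_{11}$ by $-n\mathring{R}_{11}$) I expect to obtain the clean identity
\[
\mathring{R}_{ab}=-\frac{n-2}{n-1}\,W^\nu_{ab}-\frac{1}{n-1}\,\mathring{R}_{11}\,\delta_{ab}.
\]

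Finally I would square this relation and sum over $a,b\ge 2$. The cross term is proportional to $\sum_{a\ge2}W^\nu_{aa}$, which vanishes because $W^\nu$ is trace-free: indeed $\sum_{a\ge2}W_{a1a1}=-W_{1111}=0$ by the total trace-freeness of the Weyl tensor. The diagonal term contributes $\frac{1}{(n-1)^2}\mathring{R}_{11}^2\cdot(n-1)=\frac{1}{n-1}\mathring{R}_{11}^2$, and the Weyl term contributes $\big(\tfrac{n-2}{n-1}\big)^2|W^\nu|^2$. Adding the normal block $\mathring{R}_{11}^2$ then produces $\big(1+\tfrac{1}{n-1}\big)\mathring{R}_{11}^2=\tfrac{n}{n-1}\mathring{R}_{11}^2$, which is precisely the claimed formula.

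The main point requiring care is the derivation of the relation for $W^\nu_{ab}$: one must track that it is exactly the eigenvector property $R_{1a}=0$ (and not any finer boundary geometry) that makes $T_{a1b}$ reduce to a multiple of $R_{ab}$ plus a scalar, and one must correctly handle the contraction $R_{js}\nabla_s f=-|\nabla f|\,R_{11}\delta_{j1}$ arising from the last group of terms in (\ref{TensorT}). After that the algebra is routine; the only remaining subtleties are that $|W^\nu|^2=\sum_{a,b\ge2}(W^\nu_{ab})^2$, since $W^\nu_{1j}=W^\nu_{i1}=0$, and that the trace-free condition is what eliminates the cross term.
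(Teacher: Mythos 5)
Your proof is correct, but it takes a genuinely different route from the paper's. Both arguments start from the same two ingredients --- the eigenvector property $R_{1a}=0$ of Lemma \ref{autoRIC} and the boundary identity $T_{ijk}=-W_{ijkl}\nabla_{l}f=|\nabla f|\,W_{ijk1}$ coming from (\ref{CTW}) and $f|_{\partial M}=0$ --- but they use them at different levels. The paper stays at the level of scalar invariants: it contracts the boundary identity twice against the Ricci tensor to obtain $W_{ipjq}R_{pq}\nabla_{i}f\nabla_{j}f=-\frac{n-2}{2(n-1)}|T_{ijk}|^{2}$ (Eq. (\ref{eqWric1})) and, on the other hand, $|W^{\nu}|^{2}=-\frac{n-1}{(n-2)|\nabla f|^{2}}W_{ipjq}R_{ij}\nabla_{p}f\nabla_{q}f$; comparing the two gives $|T_{ijk}|^{2}=2|\nabla f|^{2}|W^{\nu}|^{2}$ (Eq. (\ref{TWnu})), which is then substituted into the norm computation of Lemma \ref{NormaTB}. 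You instead read the boundary identity componentwise, at the indices $(i,j,k)=(a,1,b)$, and extract the pointwise linear relation $\mathring{R}_{ab}=-\frac{n-2}{n-1}W^{\nu}_{ab}-\frac{1}{n-1}\mathring{R}_{11}\delta_{ab}$ on $\partial M$, from which the lemma follows by squaring; your handling of the cross term via trace-freeness of $W$, of the contraction $R_{js}\nabla_{s}f=-|\nabla f|R_{11}\delta_{j1}$, and of the identification $|W^{\nu}|^{2}=\sum_{a,b\geq2}(W^{\nu}_{ab})^{2}$ are all correct. Your route is more elementary in that it bypasses Lemma \ref{NormaTB} altogether, and it yields strictly more information: the pointwise identity determines the full tangential traceless Ricci tensor on $\partial M$ in terms of $W^{\nu}$ and $\mathring{R}_{11}$, not merely its norm, and it makes the provenance of the coefficients $\frac{n}{n-1}$ and $\left(\frac{n-2}{n-1}\right)^{2}$ transparent. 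What the paper's organization buys in exchange is reusability: the intermediate identities (\ref{eqWric1}) and (\ref{TWnu}) are not throwaway steps but are invoked again in Lemma \ref{EqRIC11} and in the derivation of Identity (\ref{normW}), so that work is amortized across the rest of the paper.
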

\begin{proof}
From (\ref{CTW}) jointly with fact that $f|_{\partial M}=0,$ we obtain
\begin{eqnarray}\label{eqWric1}
W_{ipjq}R_{pq}\nabla_{i}f\nabla_{j}f&=&T_{ipq}R_{pq}\nabla_{i}f\nonumber\\
&=&\frac{1}{2}T_{ipq}(R_{pq}\nabla_{i}f-R_{iq}\nabla_{p}f)\nonumber\\
&=&-\frac{n-2}{2(n-1)}|T_{ijk}|^{2},
\end{eqnarray}
where in the last step we use the definition of the tensor $T_{ijk},$ see Eq. (\ref{TensorT}).

On the other hand, we notice that
\begin{eqnarray*}
|W^{\nu}|^{2}&=&W_{i1j1}W_{i1j1}\\
&=&\frac{1}{|\nabla f|}W_{i1j1}T_{i1j}\\
&=&\frac{n-1}{(n-2)|\nabla f|}W_{i1j1}(R_{ij}\nabla_{1}f-R_{1j}\nabla_{i}f),
\end{eqnarray*}
which by Lemma~\ref{autoRIC} we can infer
\begin{eqnarray}\label{eqWric2}
|W^{\nu}|^{2}&=&-\frac{n-1}{(n-2)|\nabla f|^{2}}W_{ipjq}R_{ij}\nabla_{p}f\nabla_{q}f.
\end{eqnarray}
Hence, comparing (\ref{eqWric1}) with (\ref{eqWric2}) we get
\begin{eqnarray}\label{TWnu}
|T_{ijk}|^{2}=2|\nabla f|^{2}|W^{\nu}|^{2}.
\end{eqnarray}
So, it suffices to substitute this equality in Lemma~\ref{NormaTB} to conclude the proof of the lemma.
\end{proof}

Now, through the weakly Einstein condition, we can also show the next lemma.

 \begin{lemma}\label{EqRIC11}
Let $(M^{n},g,f)$ be a weakly Einstein Miao-Tam critical metric. Then, restrict to the boundary $\partial M,$ we have the following identity
$$\frac{4R}{n(n-1)}\mathring{R}_{11}=\frac{1}{n}|W|^{2}-\frac{2(n^{2}-2n-2)}{n(n-2)}|\mathring{Ric}|^{2}+2\mathring{R}^{2}_{11}.$$
\end{lemma}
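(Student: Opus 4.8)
The plan is to extract a scalar identity on $\partial M$ from the tensorial formula of Lemma~\ref{WEaux} by feeding in the weakly Einstein hypothesis and evaluating in the normal direction. Since $(M^n,g,f)$ is weakly Einstein, equation~\eqref{eqwEinstein} says the left-hand side of \eqref{eqweakly} vanishes, so the entire right-hand side of \eqref{eqweakly} is zero as a symmetric $2$-tensor. I would then evaluate its $(1,1)$-component in the adapted frame with $e_{1}=-\nabla f/|\nabla f|$, using $g_{11}=1$; this produces a scalar equation involving $\breve{W}_{11}$, $|W|^{2}$, $W_{1p1q}\mathring{R}_{pq}$, $\mathring{R}_{1q}\mathring{R}_{q1}$, $R\,\mathring{R}_{11}$ and $|\mathring{Ric}|^{2}$, which after rearrangement I would solve for $\tfrac{4R}{n(n-1)}\mathring{R}_{11}$.

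Next I would simplify the three genuinely curvature-dependent terms with the machinery already assembled. By Lemma~\ref{autoRIC}, $\nabla f$ is a Ricci eigenvector on $\partial M$, so $R_{1a}=0$ for $a\geq 2$ and hence $\mathring{R}_{1q}\mathring{R}_{q1}=\mathring{R}_{11}^{2}$. For the Weyl--Ricci coupling, combining \eqref{eqWric1} with \eqref{TWnu} (equivalently, contracting $W_{1p1q}=W^{\nu}_{pq}$ against $\mathring{R}_{pq}$ and using that $W^{\nu}$ is trace-free) gives $W_{1p1q}\mathring{R}_{pq}=-\tfrac{n-2}{n-1}|W^{\nu}|^{2}$. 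Finally, Lemma~\ref{auxnormaRIC} lets me trade $|W^{\nu}|^{2}$ for $|\mathring{Ric}|^{2}$ and $\mathring{R}_{11}^{2}$ through $|W^{\nu}|^{2}=\tfrac{(n-1)^{2}}{(n-2)^{2}}\big(|\mathring{Ric}|^{2}-\tfrac{n}{n-1}\mathring{R}_{11}^{2}\big)$.

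The delicate point, and the step I expect to be the main obstacle, is the term $\breve{W}_{11}$, since in general dimension there is no pointwise identity expressing it through $|W|^{2}$ alone. The key observation is that on $\partial M$ it is controlled by the auxiliary tensor $T$: from \eqref{CTW} with $f|_{\partial M}=0$ one gets $T_{ijk}=-W_{ijkl}\nabla_{l}f=|\nabla f|\,W_{ijk1}$, whence $|T|^{2}=|\nabla f|^{2}\,\breve{W}_{11}$ after using the symmetries of the Weyl tensor to identify $\sum_{ijk}W_{ijk1}^{2}=\breve{W}_{11}$. Comparing with \eqref{TWnu}, which gives $|T|^{2}=2|\nabla f|^{2}|W^{\nu}|^{2}$, yields the boundary identity $\breve{W}_{11}=2|W^{\nu}|^{2}$. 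Geometrically this reflects that $T_{ijk}$, and hence $W_{ijk1}$, vanishes whenever the indices $i,j$ are tangential to $\partial M$, because every term of $T_{ijk}$ in \eqref{TensorT} carries a factor $\nabla_{a}f$ (with $a\in\{i,j\}$) or $R_{is}\nabla_{s}f$ that dies when $i,j\geq 2$.

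With these four substitutions in hand only bookkeeping remains: insert $\breve{W}_{11}=2|W^{\nu}|^{2}$ together with the expression for $|W^{\nu}|^{2}$, collect the $|W|^{2}$, $|\mathring{Ric}|^{2}$ and $\mathring{R}_{11}^{2}$ coefficients, and verify that the $|\mathring{Ric}|^{2}$ coefficient collapses to $-\tfrac{2(n^{2}-2n-2)}{n(n-2)}$ (using the factorization $n^{3}-4n^{2}+2n+4=(n^{2}-2n-2)(n-2)$) while the $\mathring{R}_{11}^{2}$ coefficient collapses to $2$ (using $n^{2}-4n+4=(n-2)^{2}$), the $|W|^{2}$ term surviving with coefficient $\tfrac1n$ precisely because $\breve{W}_{11}$ contributes no $|W|^{2}$. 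This reproduces exactly the claimed identity.
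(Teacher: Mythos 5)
Your proposal is correct and follows essentially the same route as the paper: both contract the identity of Lemma~\ref{WEaux} with $\nabla f\otimes\nabla f$ on $\partial M$, exploit $f|_{\partial M}=0$ through (\ref{CTW}) together with Lemma~\ref{autoRIC} and (\ref{eqWric1}), and then eliminate the auxiliary quantity using the previously established norm identities. The only cosmetic difference is that the paper carries $|T_{ijk}|^{2}/|\nabla f|^{2}$ as the intermediate quantity and substitutes Lemma~\ref{NormaTB}, whereas you carry $|W^{\nu}|^{2}$ (via (\ref{TWnu})) and substitute Lemma~\ref{auxnormaRIC}; since Lemma~\ref{auxnormaRIC} is exactly Lemma~\ref{NormaTB} rewritten through (\ref{TWnu}), the two computations are algebraically identical.
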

\begin{proof}
First of all, by Lemma~\ref{WEaux} and our assumption that $M^{n}$ satisfies the weakly Einstein condition, we obtain
\begin{eqnarray*}
0&=&W_{ipqr}W_{jpqr}\nabla_{i}f\nabla_{j}f-\frac{|W|^{2}}{n}|\nabla f|^{2}+\frac{4}{n-2}W_{ipjq}\mathring{R}_{pq}\nabla_{i}f\nabla_{j}f\\
&&+2\frac{(n-4)}{(n-2)^{2}}\mathring{R}_{1q}\mathring{R}_{q1}|\nabla f|^{2}+\frac{4R}{n(n-1)}\mathring{R}_{11}|\nabla f|^{2}-\frac{2(n-4)}{n(n-2)^{2}}|\mathring{Ric}|^{2}|\nabla f|^{2},
\end{eqnarray*}
which can be rewritten, using Eq. (\ref{CTW}) and Lemma~\ref{autoRIC}, as
\begin{eqnarray*}
0&=&\frac{1}{|\nabla f|^{2}}|T_{ijk}|^{2}-\frac{|W|^{2}}{n}+\frac{4}{n-2}W_{1p1q}\mathring{R}_{pq}+2\frac{(n-4)}{(n-2)^{2}}\mathring{R}^{2}_{11}\\
&&+\frac{4R}{n(n-1)}\mathring{R}_{11}-\frac{2(n-4)}{n(n-2)^{2}}|\mathring{Ric}|^{2}.
\end{eqnarray*}
Now, by equation (\ref{eqWric1}) jointly with Lemma~\ref{NormaTB}, it is easy to verify that the above identity becomes
\begin{eqnarray*}
\frac{4R}{n(n-1)}\mathring{R}_{11}&=&-\frac{n-3}{(n-1)|\nabla f|^{2}}|T_{ijk}|^{2}+\frac{|W|^{2}}{n}-2\frac{(n-4)}{(n-2)^{2}}\mathring{R}_{11}^{2}+\frac{2(n-4)}{n(n-2)^{2}}|\mathring{Ric}|^{2}\\
&=&\frac{1}{n}|W|^{2}-\frac{2(n^{2}-2n-2)}{n(n-2)}|\mathring{Ric}|^{2}+2\mathring{R}^{2}_{11},
\end{eqnarray*}
as desired.
\end{proof}

\subsection{Proof of Theorems \ref{thmMainA} and \ref{thmMainB} }
\begin{proof}

It is well known that in the three-dimensional case the Weyl tensor vanishes completely. Hence, when we restrict to this case it is not difficult to see that, substituting Lemma~\ref{auxnormaRIC} into Lemma~\ref{EqRIC11}, we obtain
\begin{equation}\label{3Dcase}
R\mathring{R}_{11}=|\mathring{Ric}|^{2}.
\end{equation}
Furthermore, taking into account that the scalar curvature is nonnegative, it follows from (\ref{3Dcase}) that $\mathring{R}_{11}\geq0.$ Then, we are in position to use Lemma~\ref{intauxRIC} to infer that $(M^n ,\,g)$ is a Einstein manifold, and by Theorem~\ref{thmMT}, we may conclude that $(M^n ,\,g)$ is isometric to a geodesic ball in a simply connected space form $\Bbb{R}^{3}$ or $\Bbb{S}^{3}$, which proves Theorem \ref{thmMainA}.

In what follows we shall consider the indexes $a,b,c,d\in\{2,\dots,n\}.$ Now, we claim that
\begin{eqnarray}\label{normW}
|W|^{2}=4|W^{\nu}|^{2}+W_{abcd}W_{abcd},
\end{eqnarray}
on the boundary $\partial M.$ In fact, using (\ref{eqWric1}) and (\ref{TWnu})  we obtain
\begin{eqnarray}\label{eqnormaW}
|W|^{2}&=&W_{ijkl}W_{ijkl}\nonumber\\
&=&W_{ijk1}W_{ijk1}+W_{ijka}W_{ijka}\nonumber\\
&=&\frac{1}{|\nabla f|^{2}}|T_{ijk}|^{2}+W_{ijka}W_{ijka}\nonumber\\
&=&2|W^{\nu}|^{2}+W_{1jka}W_{1jka}+W_{djka}W_{djka}.
\end{eqnarray}
In order to proceed, noticing that $W_{1jka}=\frac{1}{|\nabla f|}T_{akj}$ we use the symmetries of Weyl tensor to arrive at
\begin{eqnarray*}
W_{1jka}&=&\frac{1}{|\nabla f|}\left\{\frac{n-1}{n-2}R_{aj}\nabla_{k}f-\frac{R}{n-2}g_{aj}\nabla_{k}f+\frac{1}{n-2}g_{aj}R_{ks}\nabla_{s}f\right\}.
\end{eqnarray*}
Thus, analyzing each possibility in the above expression, it is immediate to check that
this data substituted in (\ref{eqnormaW}) becomes
\begin{eqnarray*}
|W|^{2}&=&3|W^{\nu}|^{2}+W_{a1jd}W_{a1jd}+W_{abjd}W_{abjd}\\
&=&4|W^{\nu}|^{2}+W_{a1cd}W_{a1cd}+W_{ab1d}W_{ab1d}+W_{abcd}W_{abcd},
\end{eqnarray*}
and since we already known that $W_{abc1}=0,$ clearly we obtain Identity (\ref{normW}).

Next, with a straightforward computation using Lemma~\ref{auxnormaRIC} and Lemma~\ref{EqRIC11} we can deduce the following identity
\begin{eqnarray}\label{R11 AUX}
\frac{4R}{n(n-1)}\mathring{R}_{11}&=&\frac{1}{n}|W|^{2}-\frac{2(n^{2}-2n-2)}{n(n-2)}|\mathring{Ric}|^{2}\nonumber\\
&&+2\left(\frac{n-1}{n}|\mathring{Ric}|^{2}-\frac{(n-2)^{2}}{n(n-1)}|W^{\nu}|^{2}\right)\nonumber\\
&=&\frac{1}{n}|W|^{2}-\frac{2(n-2)^{2}}{n(n-1)}|W^{\nu}|^{2}-\frac{2(n-4)}{n(n-2)}|\mathring{Ric}|^{2},
\end{eqnarray}
which from Eq. (\ref{normW}) allow us to conclude a key inequality in the four-dimensional case, i.e.,
 $$R\mathring{R}_{11}\geq|W^{\nu}|^{2}.$$
So, when scalar curvature is positive, we get $\mathring{R}_{11}\geq0.$ Then, the result follows from Lemma~\ref{intauxRIC} combined with Theorem~\ref{thmMT}.

Now we shall restrict to a $4$-dimensional weakly Einstein critical metric with $R=0.$ With this consideration in mind, it follows from Lemma \ref{WEaux} and Lemma~\ref{Dedaux} that
\begin{equation}\label{WR}
W_{ikjl}R_{kl}=0.
\end{equation}
Consequently, taking the derivative in the above expression, and using Eq. (\ref{cottonwyel}), we obtain
\begin{eqnarray}\label{wraux}
0&=&\nabla_{i}W_{ikjl}R_{kl}+W_{ikjl}\nabla_{i}R_{kl}\nonumber\\
&=&-\frac{1}{2}C_{ljk}R_{kl}+\frac{1}{2}W_{ikjl}C_{ikl}.
\end{eqnarray}
Then, computing Identity (\ref{wraux}) in $\nabla f$, and changing the index for simplicity, we deduce
\begin{eqnarray*}
0&=&C_{ijk}R_{jk}\nabla_{i}f-W_{ijkl}\nabla_{l}fC_{ijk}\\
&=&\frac{1}{2}C_{ijk}(R_{jk}\nabla_{i}f-R_{ik}\nabla_{j}f)-W_{ijkl}\nabla_{l}fC_{ijk},
\end{eqnarray*}
that can be rewritten, using (\ref{TensorT}), as
\begin{equation}\label{keyC}
\frac{1}{3}C_{ijk}T_{ijk}+W_{ijkl}\nabla_{l}fC_{ijk}=0.
\end{equation}
In order to proceed, by (\ref{WR}) and definition of the auxiliary tensor $T_{ijk}$ we have that
$$T_{ijk}W_{ijkl}\nabla_{l}f=\frac{3}{2}R_{ik}\nabla_{j}fW_{ijkl}\nabla_{l}f=0,$$
which combining with (\ref{CTW}) allow us to conclude two key identities, i.e.,
$$fC_{ijk}T_{ijk}=|T|^{2},$$
and
$$fC_{ijk}W_{ijkl}\nabla_{l}f=|\iota_{\nabla f}W|^{2},$$
where $\iota$ denotes the interior product $(\iota_{\nabla f}W)_{ijk}=W_{ijkl}\nabla_{l}f.$
So, returning to equation (\ref{keyC}), it is easy to see that $T_{ijk}=W_{ijkl}\nabla_{l}f=0,$ and from Lemma~\ref{Dedaux} we can infer
\begin{equation}\label{normaW}
0=W_{ipqr}W_{jpqr}\nabla_{i}f\nabla_{j}f=\frac{|W|^{2}}{4}|\nabla f|^{2}.
\end{equation}
Now, we choose harmonic coordinates to deduce that the potential function $f$ and the metric $g$ are analytic, see for example
\cite[Proposition 2.8]{Corvino} (or \cite[Proposition 2.1]{CEM}). Whence, equation \eqref{normaW} yields that $M^4$ is locally conformally flat, since $f$ can not be constant in a non-empty open set. Therefore, we can use the Theorem 1.2 of \cite{miaotamTAMS} to conclude that $(M^{4},g)$ is either isometric to a geodesic ball in a simply connected space form $\Bbb{R}^{4}$, or isometric to a standard cylinder over a compact without boundary manifold $(N,g_{N})$  with constant sectional curvature $\kappa_{0}>0$ with the product metric $$\left(I\times N,dt^{2}+\frac{\kappa_{0}^{2}t^{2}+a}{\kappa_{0}}g_{N}\right),$$
where $I$ is a finite interval in $\mathbb{R}$ containing the origin $0$ and for some constant $a>0$, or $(M^{4},g)$ is covered by the such warped product with a covering group $\mathbb{Z}_{2}.$ So, the proof is completed.

\subsection{Proof of Theorem \ref{thmMainC}}
Since we are assuming $W|_{T\partial M}=0,$ it suffices to substitute (\ref{normW}) in (\ref{R11 AUX}), to deduce
$$\frac{2R}{n-1}\mathring{R}_{11}=-\frac{n^{2}-6n+6}{n-1}|W^{\nu}|^{2}-\frac{n-4}{n-2}|\mathring{Ric}|^{2}.$$
Thus, as the Riemannian manifold $M^{n}$ has non-positive scalar curvature, and $n\geq 5$, it is immediate to verify that the above expression gives $\mathring{R}_{11}\geq0.$ Therefore, we are in position to invoke Lemma~\ref{intauxRIC} to deduce that $(M^n ,\,g)$ is an Einstein manifold, and, once more by Theorem~\ref{thmMT}, we may conclude that $(M^n ,\,g)$ is isometric to a geodesic ball in a simply connected space form $\Bbb{R}^{n}$ or $\Bbb{H}^{n}.$
\end{proof}


\end{document}